\numberwithin{equation}{section}
\theoremstyle{definition}
\newtheorem{definition}{Definition}[section]
\theoremstyle{remark}
\newtheorem{remark}[definition]{Remark}
\theoremstyle{plain}
\newtheorem{theorem}[definition]{Theorem}
\newtheorem{result}[definition]{Result}
\newtheorem{lemma}[definition]{Lemma}
\newtheorem{proposition}[definition]{Proposition}
\newtheorem{corollary}[definition]{Corollary}
\newcommand{\eps}{\varepsilon}
\newcommand{\OM}{\Omega}
\newcommand{\smoo}{\mathcal{C}}
\newcommand{\sh}{\mathscr{S}}
\newcommand{\iti}{\pi_{{\rm itn}}}
\newcommand{\wrd}{\pi_{{\rm symb}}}
\newcommand{\sta}{{\sf start}}
\newcommand{\varf}{{\sf f}}
\newcommand{\Sig}{\overline{\sigma}}
\newcommand{\bcdot}{\boldsymbol{\cdot}}
\newcommand{\lrarw}{\longrightarrow}
\newcommand{\gen}{\mathscr{G}}
\newcommand\Add[1]{\sum_{{#1}}\nolimits^{\prime}}
\newcommand{\IVar}{\Gamma^\bullet}
\newcommand{\Fam}{\mathscr{F}}
\newcommand\ordi[1]{{#1}^{\raisebox{-2pt}{$\scriptstyle{{\rm th}}$}}}
\newcommand\pthsp[1]{{}^{\raisebox{-1pt}{$\scriptstyle{{#1}\!}$}}X}
\newcommand\pathsp[1]{{}^{\raisebox{-1pt}{$\scriptstyle{{#1}}$}}\mathbb{P}^1}
\newcommand\pth[1]{{}^{\raisebox{-1pt}{$\scriptstyle{{#1}\!}$}}\mathscr{O}}
\newcommand\pat{\Gamma^{\infty}_{\mathscr{G}}}
\newcommand{\met}{D}
\newcommand{\kd}{\boldsymbol{\delta}}
\newcommand{\wod}{\overline{\boldsymbol{\alpha}}}
\newcommand{\jac}{{\rm Jac}}
\newcommand\gti[1]{\boldsymbol{[}{#1}\boldsymbol{]}}
\newcommand{\A}{A_{\raisebox{2pt}{$\scriptstyle{\beta,\,\wod}$}}}
\newcommand{\E}{\mathfrak{S}_{\mathscr{G}}}
\newcommand{\excp}{\mathcal{E}(\mathscr{G})}
\newcommand{\Z}{\mathbb{Z}}
\newcommand{\N}{\mathbb{N}}
\newcommand{\C}{\mathbb{C}} 
\newcommand{\R}{\mathbb{R}}
\newcommand{\pro}{\mathbb{P}}
\newcommand{\wht}{\widehat}
\newcommand{\bsy}{\mathsf}
\begin{document}

\title[Entropy of correspondences \& rational semigroups]{The entropy of holomorphic
correspondences: exact \\ computations and rational semigroups}

\author{Gautam Bharali}
\address{Department of Mathematics, Indian Institute of Science, Bangalore 560012, India}
\email{bharali@iisc.ac.in}

\author{Shrihari Sridharan}
\address{Indian Institute of Science Education \& Research, Thiruvananthapuram 695551, India}
\email{shrihari@iisertvm.ac.in}

\begin{abstract}
We study two notions of topological entropy of correspondences introduced by Friedland and
Dinh--Sibony. Upper bounds are known for both. We identify a class of holomorphic correspondences
whose entropy in the sense of Dinh--Sibony equals the known upper bound. This provides an exact
computation of the entropy for rational semigroups. We also explore a connection between these
two notions of entropy.  
\end{abstract}

\keywords{Holomorphic correspondences, rational semigroups, topological entropy}
\subjclass[2010]{Primary: 30D05, 37B40, 37F05; Secondary: 32H50}

\maketitle

\vspace{-0.6cm}
\section{Introduction, definitions and some results}\label{S:intro}
This paper studies certain semigroups of holomorphic maps. It is motivated,
however, by two related notions of topological entropy\,---\,one of which applies to
meromorphic correspondences on a compact K{\"a}hler manifold, while the other applies, more
generally, to closed relations on a compact metric space. Both notions are thus applicable to
holomorphic correspondences (which we shall define presently) on a compact K{\"a}hler manifold.
The first notion is due to Dinh and Sibony \cite{dinhSibony:ubtemc08} while the second,
introduced much earlier, is due to Friedland \cite{friedland:eam93}. In both
cases, upper bounds for each type of topological entropy were given: by Friedland
in \cite{friedland:eam93} and by Dinh--Sibony in \cite{dinhSibony:ubtemc08}.
However, for either type of entropy, this
upper bound is in general \textbf{strictly greater} than the
actual entropy. In this work, among other things, we identify a natural class of
holomorphic correspondences for which this upper bound equals the entropy of Dinh--Sibony.
\smallskip

We use the word ``natural'' because the above-mentioned correspondences turn out to be correspondences
representing certain semigroups of holomorphic maps. Hence, the main results in this paper will
be stated for these semigroups. To get to these results, we need some definitions.

\begin{definition}\label{D:holCorr}
Let $X_1$ and $X_2$ be two compact, connected complex manifolds of dimension $n$. A
\emph{holomorphic correspondence} from $X_1$ to $X_2$ is a formal linear combination
of the form
\begin{equation}\label{E:stdForm}
 \Gamma = \sum\nolimits_{1\leq j\leq N}\!m_j\Gamma_j,
\end{equation}
where the $m_j$'s are positive integers and $\Gamma_1, \Gamma_2,\dots,\Gamma_N$ are distinct
irreducible complex-analytic subvarieties of $X_1\times X_2$ of pure dimension $n$ that satisfy the following
conditions:
\begin{enumerate}[leftmargin=27pt]
  \item for each $\Gamma_j$ in \eqref{E:stdForm},
  $\left.\pi_1\right|_{\Gamma_j}$ and $\left.\pi_2\right|_{\Gamma_j}$ are surjective;
  \vspace{0.8mm}
  
  \item for each $x\in X_1$ and $y\in X_2$, $\left(\pi_1^{-1}\{x\}\cap \Gamma_j\right)$ and
  $\left(\pi_2^{-1}\{y\}\cap \Gamma_j\right)$ are finite sets for each $j$;
\end{enumerate}
where $\pi_i$ is the projection onto $X_i$, $i=1,2$. 
\end{definition}

Given a holomorphic correspondence $\Gamma$ from $X_1$ to $X_2$, the
set (in terms of the notation in \eqref{E:stdForm}) $\cup_{1\leq j\leq N}\Gamma_j$ is called
the \emph{support} of $\Gamma$, which we denote by $|\Gamma|$.
The data $(m_1,\dots, m_N)$ in \eqref{E:stdForm} are an essential
part of the definition above. We shall elaborate on this below, but a brief
reason is as follows. If $X_1 = X_2 = X$ in Definition~\ref{D:holCorr}, then we say that $\Gamma$ is
a \emph{holomorphic correspondence \textbf{on} $X$}. Two holomorphic correspondences on $X$ can be
composed with each other. It is possible for a correspondence $\Gamma$, even if
$m_1 = \dots = m_N = 1$, to be such that some of the irreducible components of
$\Gamma\circ \Gamma$ occur with multiplicity higher than $1$. The ability to compose two
correspondences introduces the perspective of dynamics to the study of correspondences.
\smallskip

We now introduce the two notions of entropy that we alluded to. We begin with the more general
notion.

\begin{definition}[Friedland, \cite{friedland:eam93,friedland:egsg96}]\label{D:entropy_F}
Let $X$ be a compact metric space and let $\Gamma$ be a closed relation on $X$ (i.e.,
$\Gamma$ is a closed subset of $X\times X$ and the projection $\left.\pi_1\right|_{\Gamma}$
is surjective). Let
\[
  X^{\N}\,:=\,\{(x_0, x_1, x_2,\dots) : x_n\in X, \; n\in \N\}
\]
endowed with the product topology, and let
\[
  \Gamma^\infty\,:=\,\{(x_0, x_1, x_2,\dots) \in X^{\N} : (x_n, x_{n+1})\in \Gamma
  \; \ \forall n\in \N\}.
\]
If $\Gamma^\infty$ is endowed
with the topology that it inherits from $X^{\N}$, then, by definition, the left-shift on $X^{\N}$
induces a continuous map $\sigma: \Gamma^\infty\lrarw \Gamma^\infty$, where
$\sigma: (x_0, x_1, x_2,\dots) \longmapsto (x_1, x_2, x_3,\dots)$. Then
\emph{Friedland's entropy} for $\Gamma$, denoted by $h_F(\Gamma)$, is defined as the
topological entropy, in the sense of Bowen, of $\sigma: \Gamma^\infty\lrarw \Gamma^\infty$
(usually denoted by $h(\sigma)$ in the literature).
\end{definition}
For the sake of completeness we will define the topological entropy
in the sense of Bowen\,---\,see Section~\ref{S:notn_top_ent}, where we examine the latter notion more closely.
For the next definition, we need an alternative presentation of the correspondence introduced
in Definition~\ref{D:holCorr}. We rewrite $\Gamma$ as
\begin{equation}\label{E:stdForm_mult}
  \Gamma\,=\,\Add{1\leq j\leq M}\!\IVar_j,
\end{equation}
where the primed sum indicates that the irreducible subvarieties 
$\IVar_{j}$, $j=1,\dots,M$, are \emph{not necessarily distinct} and are repeated according
to multiplicity that is given by the coefficients $m_1,\dots, m_N$ in \eqref{E:stdForm}.
Therefore, $M = m_1+\dots + m_N$. With this explanation, we give

\begin{definition}[Dinh--Sibony, \cite{dinhSibony:ubtemc08}]\label{D:entropy_D-S}
Let $X$ be compact, connected complex manifold and let $\Gamma$ be a holomorphic
correspondence on $X$. For each $\nu\in \Z_+$, a \emph{$\nu$-orbit of $\Gamma$} is any
tuple of the form
\[
  (x_0, x_1,\dots, x_{\nu}; \alpha_1,\dots, \alpha_{\nu})\in X^{\nu+1}\times \{1,\dots,M\}^{\nu},
\]
where $(x_{j-1}, x_j)\in \IVar_{\alpha_j}$, $j=1,\dots, \nu$, assuming the presentation
\eqref{E:stdForm_mult} for $\Gamma$. Fix a metric $d$ compatible with the topology of $X$.
If $\Fam$ is a family of $\nu$-orbits, we say that $\Fam$ is an
\emph{$(\eps, \nu)$-separated family}, $\eps>0$, if for all pairs of distinct elements
\[
  (x_0, x_1,\dots, x_{\nu}; \alpha_1,\dots, \alpha_{\nu}) \quad\text{and}
  \quad (y_0, y_1,\dots, y_{\nu}; \beta_1,\dots, \beta_{\nu})
\]
of $\Fam$, we have
\begin{equation}\label{E:eps-nu_sep}
  d(x_j, y_j)\,>\,\eps \text{ for some $j=0,1,\dots, \nu$} \quad\text{or}
  \quad \alpha_j\neq \beta_j \quad \text{ for some $j = 1,\dots, \nu$}.
\end{equation}
Then, the \emph{topological entropy of $\Gamma$}, denoted by $h_{top}(\Gamma)$,
is defined as
\[
  h_{top}(\Gamma)\,:=\,\sup_{\eps>0}\,\limsup_{\nu\to \infty}\frac{1}{\nu}
  \log\big(\max\{\sharp\Fam : \Fam \text{ is an $(\eps, \nu)$-separated family}\}\big).
\]
\end{definition}

\begin{remark}
Since the manifold $X$ is compact, for any $\eps>0$ and $\nu\in \Z_+$, any
$(\eps, \nu)$-separated family in the above definition is finite. Furthermore, it is
routine to verify (see \cite[Section~4]{dinhSibony:ubtemc08}) that $h_{top}(\Gamma)$
does not depend on the choice of the metric $d$ for defining
$(\eps, \nu)$-separatedness.
\end{remark}

Comparing the above definition with Definition~\ref{D:Bowen_entropy}, we see that
Definition~\ref{D:entropy_D-S} is closely related to Bowen's definition of the topological entropy
of maps.
\smallskip

\begin{definition}\label{D:rat_semigroup}
A \emph{rational semigroup} is a semigroup, with composition of maps as the semigroup operation,
whose elements are surjective holomorphic self-maps of $\pro^n$ for some $n\in \Z_+$.
\end{definition}

\begin{remark}
Note that, despite the word ``rational'' in Definition~\ref{D:rat_semigroup}, the
elements of a rational semigroup on $\pro^n$, $n\geq 2$, do \textbf{not} possess indeterminacies.
We have some results about classical rational semigroups (i.e., defined on
$(\C\cup \{\infty\})\cong \pro^1$), and do not want to coin new terminology for theorems that
also hold true for higher-dimensional analogues of the latter semigroups. This is the reason for
the term ``rational semigroups'' introduced in Definition~\ref{D:rat_semigroup}. 
\end{remark}  

There is a very natural connection between \emph{finitely} generated rational semigroups and
holomorphic correspondences. Since this association defines the holomorphic correspondences
for which we shall make exact entropy computations, let us state it formally. This
association makes sense in greater generality, and not just for rational semigroups. 

\begin{definition}\label{D:assoc}
Let $X$ be a compact, connected complex manifold and let $S$ be a finitely generated semigroup
consisting of surjective holomorphic self-maps of $X$. Let $\gen = \{f_1,\dots, f_N\}$ be a set
of generators of $S$.
We call the holomorphic correspondence
\begin{equation}\label{E:assoc}
  \Gamma_{\gen} := \sum\nolimits_{1\leq j\leq N}{\sf graph}(f_j)
\end{equation}
on $X$ the \emph{holomorphic correspondence associated with $(S, \gen)$}.
\end{definition}

As has been observed earlier\,---\,see \cite{ghysLangevinWalczak:egf88} 
by Ghys, Langevin and Walczak
for the case of pseudogroups, or \cite{friedland:egsg96}\,---\,the entropy
of a finitely-generated semigroup requires the specification of a set of generators. Thus, for $X$ and $S$
as in Definition~\ref{D:assoc}, and for a choice $\gen$ of a set of generators of $S$, we formally set:
\[
  h_F(S, \gen)\,:=\,h_{F}\big(|\Gamma_{\gen}|\big) \quad\text{and}
  \quad h_{top}(S, \gen)\,:=\,h_{top}(\Gamma_{\gen}).
\]
We must mention that with $S$ as above, the definition of 
Friedland's entropy for $S$ in \cite{friedland:egsg96}, denoted by $h_F(S)$, is independent of the
choice of the set of generators. Its definition is
\[
  h_F(S)\,:=\,\inf\big\{h_F(S, \gen) :\ \text{$\gen$ is a finite set of generators of $S$}\big\}.
\]
Given this definition, it is essential to first understand the quantity $h_F(S, \gen)$ for a given
set of generators $\gen$\,---\,which is why we focus on $h_F(S, \gen)$ and $h_{top}(S, \gen)$
in this work.
\smallskip

Recall that if $f: \pro^n\lrarw \pro^n$
is holomorphic, then using homogeneous coordinates, we have:
\[
 f([z_0: z_1:\dots : z_n])\,=\,\big[\varf_0(z_0, z_1,\dots, z_n): \varf_1(z_0, z_1,\dots, z_n):
 \dots : \varf_n(z_0, z_1,\dots, z_n)\big],
\]
where there exists a number $d_1(f)\in \Z_+$, and
$\varf_0, \varf_1,\dots, \varf_n$ are homogeneous polynomials of degree $d_1(f)$ such that
$\cap_{i=0}^n\,\varf_i^{-1}\{0\} = \{0\}$. With this definition, we can state our first result. 

\begin{theorem}\label{T:main_D-S}
Let $S$ be a finitely generated rational semigroup on $\pro^n$ for some $n\in \Z_+$. Let
$\gen = \{f_1,\dots, f_N\}$ be a set of generators of $S$. Then
\[
  h_{top}(S, \gen)\,=\,\log\left(\,\sum\nolimits_{1\leq j\leq N} d_1(f_j)^n\right).
\]
\end{theorem}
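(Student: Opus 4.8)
The plan is to compute the topological entropy directly from Definition~\ref{D:entropy_D-S} by analyzing the $\nu$-orbits of $\Gamma_{\gen}$ and counting maximal $(\eps,\nu)$-separated families. Since each generator $f_j$ is a single-valued holomorphic self-map of $\pro^n$, the correspondence $\Gamma_{\gen} = \sum_{1\leq j\leq N}{\sf graph}(f_j)$ has support consisting of $N$ graphs, so in the presentation \eqref{E:stdForm_mult} we have $M=N$ and $\IVar_j = {\sf graph}(f_j)$. Consequently a $\nu$-orbit $(x_0,\dots,x_\nu;\alpha_1,\dots,\alpha_\nu)$ is completely determined by its starting point $x_0$ and its symbol sequence $(\alpha_1,\dots,\alpha_\nu)\in\{1,\dots,N\}^\nu$, via $x_j = f_{\alpha_j}(x_{j-1})$. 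Thus counting orbits reduces to understanding, for each fixed word $w=(\alpha_1,\dots,\alpha_\nu)$, how finely the composition map $f_w := f_{\alpha_\nu}\circ\cdots\circ f_{\alpha_1}$ together with its intermediate iterates separates points of $\pro^n$.

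\smallskip

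\textbf{Upper bound.} First I would establish $h_{top}(S,\gen)\leq\log\bigl(\sum_j d_1(f_j)^n\bigr)$. For a fixed word $w$, the number of $(\eps,\nu)$-separated orbits carrying that symbol sequence is at most the maximal number of $(\eps,\nu)$-separated points of $\pro^n$ under the dynamical system generated by the partial compositions $x_0\mapsto(x_0,f_{\alpha_1}(x_0),\dots,f_w(x_0))$. This is controlled by the $n$-dimensional volume (with respect to the Fubini--Study form $\ome$) swept out by the graph of the full orbit map, which in turn is governed by the pullback $f_w^*\ome^n$ and hence by the first dynamical degree. The key algebraic input is that $d_1(f_j\circ f_i)=d_1(f_j)\,d_1(f_i)$ for holomorphic self-maps of $\pro^n$ (no indeterminacy, so degrees multiply exactly), giving $d_1(f_w)=\prod_{k}d_1(f_{\alpha_k})$ and $\int_{\pro^n}f_w^*\ome^n = d_1(f_w)^n$. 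Summing the per-word bounds over all $N^\nu$ words and using $\sum_{w\in\{1,\dots,N\}^\nu}d_1(f_w)^n = \bigl(\sum_j d_1(f_j)^n\bigr)^\nu$, I would take $\tfrac1\nu\log$ and let $\nu\to\infty$ to recover the stated upper bound. (This direction is essentially the Dinh--Sibony upper bound specialized to graphs, so it may be quotable from \cite{dinhSibony:ubtemc08}.)

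\smallskip

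\textbf{Lower bound.} The substantive direction is the matching lower bound, showing the upper bound is attained. Here distinct symbol sequences already contribute to $(\eps,\nu)$-separatedness via the second clause of \eqref{E:eps-nu_sep}: any two orbits with $\alpha_j\neq\beta_j$ for some $j$ are automatically separated, for every $\eps>0$. So within each fixed word $w$ I need to produce roughly $d_1(f_w)^n$ orbits that are pairwise $(\eps,\nu)$-separated in the spatial sense, and then the total count is at least $\sum_w(\text{points separated under }f_w)$. The natural mechanism is that the map $f_w:\pro^n\to\pro^n$ has topological degree $d_1(f_w)^n$, so its iterated-graph dynamics should separate about $d_1(f_w)^n$ points at a fixed resolution $\eps$; one builds such a separated set by taking preimages under $f_w$ of a fixed $\eps$-separated net and invoking that $f_w$ is a $d_1(f_w)^n$-to-one branched covering. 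Summing over the $N^\nu$ words and again invoking multiplicativity of $d_1$ yields the lower bound $\log\bigl(\sum_j d_1(f_j)^n\bigr)$.

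\smallskip

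\textbf{Main obstacle.} I expect the delicate point to be the spatial separation estimate within a fixed word: one must exhibit, uniformly in $w$ and for a resolution $\eps$ independent of $\nu$, at least $c\cdot d_1(f_w)^n$ points whose orbits stay $\eps$-separated at \emph{some} coordinate, while controlling the contribution of the branch locus and the possibility that $f_w$ contracts large regions (making points that are preimage-distinct nonetheless $\eps$-close throughout the orbit). Handling this cleanly likely requires a volume/mass argument: comparing the separated count to $\int f_w^*\ome^n$ via a Lelong-type or Yomdin-type lower bound, or exploiting equidistribution of preimages. Since the two bounds meet exactly, the whole proof hinges on matching the combinatorial symbol count $N^\nu$ with the exact per-word geometric degree $d_1(f_w)^n$, and the multiplicativity identity $\sum_{w}d_1(f_w)^n=\bigl(\sum_j d_1(f_j)^n\bigr)^\nu$ is what makes the limit collapse to the clean logarithm.
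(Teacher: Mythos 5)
Your overall architecture coincides with the paper's: the upper bound is obtained by quoting the Dinh--Sibony bound (Result~\ref{R:ent_u-bd}) after computing the dynamical degrees of $\Gamma_{\gen}$ on $\pro^n$ from the multiplicativity of the algebraic degree (the Russakovskii--Shiffman identities $d_p(f)=d_1(f)^p$), and the lower bound is assembled word-by-word, using the fact that orbits with distinct symbol sequences are automatically separated (this is the paper's Lemma~\ref{L:sum-up}) together with the multinomial identity $\sum_{\wod}\prod_k d_{top}(f_{\alpha_k})=\bigl(\sum_j d_{top}(f_j)\bigr)^{\nu}$.

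However, the step you label the ``main obstacle'' is precisely the substantive content of the proof, and your proposal does not resolve it. You need, for a single $\eps>0$ independent of $\nu$ and of the word $\wod$, on the order of $d_{top}(f_{\wod})$ preimage orbits that are pairwise $\eps$-separated at \emph{some} coordinate; merely taking the fibre of $f_{\wod}$ over a generic point and citing the covering degree does not give this, since long compositions can contract that fibre to diameter far below $\eps$. Equidistribution of preimages pertains to a single map and does not obviously hold uniformly over all $N^{\nu}$ words, and a Yomdin-type bound points in the wrong direction. The paper closes this gap with a Misiurewicz--Przytycki argument: fix $\beta\in(0,1)$, let $\mathcal{B}(\beta,j)$ be the set where $\jac(f_j)\geq\delta(\beta):=L^{-\beta/(1-\beta)}$, and let $\eps(\beta)$ be a Lebesgue number of a cover of $\mathcal{B}(\beta,j)$ by balls on which $f_j$ is injective; then any two distinct preimages under $f_{\alpha_k}$ that both lie in $\mathcal{B}(\beta,\alpha_k)$ are forced to be $\eps(\beta)$-separated. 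The orbits spending more than $(1-\beta)\nu$ steps outside these good sets have Jacobian of the composed map less than $1$, so their image has measure strictly less than $\mu_X(X)$, and Sard's theorem supplies a regular terminal value $x_{\nu}$ avoiding that image; pulling $x_{\nu}$ back through the word produces a separated family with at least $\gti{\beta\nu}+1$ fully branching steps, which, after summing over words and letting $\beta\to 1^{-}$, yields the stated lower bound. Without some such uniform separation mechanism, your lower bound is incomplete.
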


We can say a lot more than Theorem~\ref{T:main_D-S}. The latter is a consequence of a more \textbf{general}
theorem, which provides bounds from above and below on $h_{top}(S, \gen)$ in a more general
context\,---\,see Theorem~\ref{T:general_bounds} below. 
Here, however: as the notion of a rational semigroup first arose in the area of complex dynamics in one
dimension (see \cite{hinkkanenMartin:dsrf96} by Hinkkanen and Martin)\,---\,and to foreshadow
Theorem~\ref{T:F_entr}\,---\,we state  the following special case:

\begin{corollary}\label{C:one-dim_D-S}
Let $S$ be a finitely generated rational semigroup on $\pro^1$, and let
$\gen = \{f_1,\dots, f_N\}$ be a set of generators of $S$. Then
\[
  h_{top}(S, \gen)\,=\,\log\left(\,\sum\nolimits_{1\leq j\leq N} \deg(f_j)\right).
\]  
\end{corollary}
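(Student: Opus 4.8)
The plan is to derive Corollary~\ref{C:one-dim_D-S} directly from Theorem~\ref{T:main_D-S} by specializing to $n=1$. A finitely generated rational semigroup on $\pro^1$ is precisely a finitely generated rational semigroup on $\pro^n$ in the sense of Definition~\ref{D:rat_semigroup} and Theorem~\ref{T:main_D-S} with $n=1$. Hence the theorem applies verbatim and yields
\[
  h_{top}(S, \gen)\,=\,\log\left(\,\sum\nolimits_{1\leq j\leq N} d_1(f_j)^1\right)\,=\,\log\left(\,\sum\nolimits_{1\leq j\leq N} d_1(f_j)\right).
\]
Thus the only point that requires any verification is the identification $d_1(f)=\deg(f)$ for each surjective holomorphic self-map $f$ of $\pro^1$, after which the result follows by substitution.

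To check this identification, I would recall that in homogeneous coordinates a holomorphic $f:\pro^1\lrarw\pro^1$ is written as $f([z_0:z_1])=[\varf_0(z_0,z_1):\varf_1(z_0,z_1)]$, where $\varf_0,\varf_1$ are homogeneous of the common degree $d_1(f)$ whose only common zero in $\C^2$ is the origin. The condition $\varf_0^{-1}\{0\}\cap\varf_1^{-1}\{0\}=\{0\}$ forces $\varf_0$ and $\varf_1$ to have no common linear factor, hence to be coprime. Passing to the affine chart $z=z_1/z_0$, the map $f$ is represented by a reduced rational function $P(z)/Q(z)$ with $\max(\deg P,\deg Q)=d_1(f)$; this maximum is, by definition, the classical degree $\deg(f)$ of the rational map, equivalently the cardinality of $f^{-1}\{w\}$ (counted with multiplicity) for a generic $w\in\pro^1$. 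This gives $d_1(f_j)=\deg(f_j)$ for each $j$, and substituting into the displayed formula completes the argument.

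I do not expect any genuine obstacle here: the corollary is an immediate specialization of Theorem~\ref{T:main_D-S}, and the sole step carrying any content is the elementary and standard fact that the algebraic degree $d_1(f)$ coincides with the usual degree of a rational self-map of the Riemann sphere. The substantive work is entirely contained in Theorem~\ref{T:main_D-S} itself; the value of stating the one-dimensional case separately is to set up the comparison with Friedland's entropy foreshadowed in Theorem~\ref{T:F_entr}.
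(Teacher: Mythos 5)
Your proposal is correct and matches the paper's route exactly: the paper simply remarks that Corollary~\ref{C:one-dim_D-S} follows immediately from Theorem~\ref{T:main_D-S} by taking $n=1$, and your verification that $d_1(f)$ coincides with the classical degree of a rational self-map of $\pro^1$ is the (standard) identification implicitly used there.
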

 
We now turn to Friedland's entropy. Although it makes sense in a much more
general setting, $h_F$ turns out to be harder to compute. This is
because, among other reasons, notions that approximate
concepts such as irreducible components, etc., are much less well-structured outside the complex-analytic
setting, and do not feature in Definition~\ref{D:entropy_F}. In the complex-analytic setting, 
this absence leads to two difficulties
that one can point to (with $X$ here as in Definition~\ref{D:holCorr}): 
\renewcommand{\theenumi}{{\emph{\roman{enumi}}}}
\begin{enumerate}[leftmargin=27pt]
  \item\label{I:on_mult} A holomorphic correspondence $\Gamma$ on $X$
  can be iterated. If $d_{top}(\Gamma)$ denotes the topological degree of $\Gamma$ (see
  Section~\ref{S:prelim_complex}), then one has the identity $d_{top}(\Gamma^{\circ \nu}) =
  d_{top}(\Gamma)^{\nu}$ for any $\nu\in \Z_+$. The analogous identity for the $\nu$-fold iterate of the 
  \textbf{relation} $|\Gamma|$\,---\,which is relevant to the entropy $h_F$\,---\,is not true in
  general. This vitiates computations of $h_F$.
  \vspace{0.8mm}
  
  \item If $X$ is K{\"a}hler, then either type of entropy is dominated by the quantity
  ${\rm lov}(\Gamma)$ (see
  Section~\ref{S:prelim_complex})\,---\,which results from a technique of Gromov \cite{gromov:ehm03}.
  For similar reasons as in \eqref{I:on_mult}, ${\rm lov}(\Gamma)$ turns out not to be the 
  best upper bound for $h_F(\Gamma)$ even for $\Gamma=\Gamma_{\gen}$.
\end{enumerate}
We shall elaborate upon these points and discuss further the relationship between
the above notions of entropy in Section~\ref{SS:compare} below. For the moment, we note that 
rather few examples of \textbf{exact} computations of $h_F$ (not necessarily in the
holomorphic category) are known: 
see, for instance, \cite[Section~5]{friedland:egsg96} and
\cite{gellerPollicott:eZ2afeg} by Geller and Pollicott. But, as indicated above, computing $h_F$
is inherently hard. However, certain lower bounds for $h_F(S, \gen)$, $\gen$ finite, are almost
immediate: $h_F(S, \gen)\geq h_F(\langle f: f\in \gen^\prime\rangle, \gen^\prime)$ for any
$\varnothing\neq \gen^\prime\varsubsetneq \gen$ (see Section~\ref{S:main_F} for details).
In contrast to this, for $S$ a rational
semigroup on $\pro^1$, we shall establish a lower bound for $h_F(S, \gen)$ that takes into consideration
each of the generators in $\gen$. This is our Theorem~\ref{T:F_entr}. Since it requires some notation,
we present it in Section~\ref{S:main_F}. This theorem, in turn, relies on
our central proposition of Section~\ref{S:notn_top_ent}, which might be of independent interest. The proof
of Theorem~\ref{T:main_D-S} (from which Corollary~\ref{C:one-dim_D-S} is immediate) is presented in
Section~\ref{S:main_D-S}.

\subsection{A comparison of $\boldsymbol{h_{top}}$ and $\boldsymbol{h_{F}}$}\label{SS:compare}
This section presents a bit more discussion on the two notions of entropy defined above. The material
in the sections below does not depend on this discussion (and readers interested in the proofs can skip
this on the first reading). The first point of contrast involves this question: given a correspondence
$\Gamma$ with the presentation \eqref{E:stdForm}, is $h_{top}(\Gamma)=h_{F}(\Gamma)$ if the multiplicity
of each $\Gamma_j$ is $1$? The answer to this is ``No,'' in general. To understand this answer, let us consider
one of the roles these multiplicities serve. Consider a finitely
generated semigroup $S$ and a set of generators $\gen:=\{f_1,\dots,f_N\}$. If $S$ is not freely generated
and suppose, for some $\nu\in \Z_+\setminus\{1\}$, there exists a relation
\begin{equation}\label{E:rels_n}
 f_{i_{\nu}}\circ\dots \circ f_{i_1}\,=\,f_{j_{\nu}}\circ\dots \circ f_{j_1}\,=:\Phi,
\end{equation}
where $(i_1,\dots, i_{\nu})\neq (j_1,\dots, j_{\nu})$, then  
the irreducible component ${\sf graph}(\Phi)$ occurs with multiplicity at least $2$ in
the correspondence $\Gamma_{\gen}^{\circ\nu}$. Given such a semigroup and a choice, $\gen$, of a set
of generators, it is usually hard to count all the relations of the form \eqref{E:rels_n} as $\nu\to \infty$. This,
hopefully, reveals explicitly one of the reasons why $h_{F}(\Gamma)$ is difficult to compute in many cases.
For any holomorphic correspondence, a coarse manifestation of the issue of multiplicities\,---\,discussed here for
the correspondence $\Gamma_{\gen}$ associated with a rational semigroup\,---\,is the phenomenon mentioned
in \eqref{I:on_mult} above. Now, to return to the question above, the following example illustrates our answer.
Let $S$ be the rational semigroup generated by $\gen$, let $\gen_1=\{f_1, f_2\}$, where $f_1$ and $f_2$ are
distinct loxodromic M{\"o}bius transformations such that $f_1$ and $f_2$ have the same attracting fixed point and
the same repelling fixed point. Then, while both the irreducible components of the correspondence $\Gamma_{\gen}$
(as defined in \eqref{E:assoc}) occur with multiplicity $1$, we have
\begin{align*}
  h_{F}(\Gamma_{\gen})\,&=:\,h_{F}(S, \gen)\,=\,0, && (\,\text{see \cite[Lemma~5.2]{friedland:egsg96}}\,) \\
  h_{top}(\Gamma_{\gen})\,&=:\,h_{top}(S, \gen)\,=\,\log(2).
  && (\,\text{by Corollary~\ref{C:one-dim_D-S}}\,) 
\end{align*}
One relation between the two notions of entropy that holds universally is as follows: with $X$ as above and for
any holomorphic correspondence $\Gamma$ on $X$, $h_{F}(\Gamma)\leq h_{top}(\Gamma)$. This is 
immediate from Proposition~\ref{P:ent_ed} below.
\smallskip

Now, $h_{top}$ has some features that may seem anomalous. E.g., with $X$ as above, if $\Gamma$ is an
irreducible complex-analytic subvariety of $X\times X$ with the properties stated in Definition~\ref{D:holCorr},
then $h_{top}(2\Gamma)\neq h_{top}(\Gamma)$. When $X=\pro^n$ and $\Gamma$ is the graph of a
non-constant holomorphic map $f: \pro^n\lrarw \pro^n$, then it follows from the \textbf{proof} of 
Theorem~\ref{T:main_D-S} that $h_{top}(2\Gamma) = \log(2)+h_{top}(\Gamma)$. But this value
isn't necessarily anomalous in a context where one must consider $2\Gamma$ (instead of $\Gamma$).
As a lot of the formalism of this paper is that of general holomorphic correspondences, one may ask
why one cares for the entropy of holomorphic correspondences. It would be natural to study the entropy of
these objects if one cares about the iterative dynamics of correspondences. Correspondences serve as
a common framework\,---\,as the theorems above and the discussion in Section~0 of \cite{friedland:egsg96}
testify\,---\,for a number of dynamical systems of interest. The iteration of true holomorphic correspondences
on $\pro^1$ (i.e., \textbf{not} maps) are of interest too: they realise matings between certain rational maps
and certain classes of Kleinian groups: see \cite{bullFrei:hcmC-lmHb05, bullettPenrose:mqmmg94}, for instance.
\medskip

\section{Complex geometry preliminaries}\label{S:prelim_complex}

This section is devoted to a discussion of terminology from geometry appearing in
Section~\ref{S:intro} whose definitions had been deferred, and to stating a result that constitutes
one part of the proofs of Theorem~\ref{T:main_D-S} and Corollary~\ref{C:one-dim_D-S}.
\smallskip

We first begin with a discussion of the composition of two holomorphic correspondences. Since
one needs to understand this \textbf{only} to define a certain finite sequence of numbers associated
to a correspondence $\Gamma$, we shall be brief. We refer the reader to 
\cite[Section~3]{dinhSibony:ubtemc08} for details (with a note to those unfamiliar with holomorphic
correspondences that the footnote to \cite[Section~3]{dinhSibony:ubtemc08} is irrelevant in the
case of holomorphic correspondences). We focus on two points that are relevant to
this article (in what follows, $\Gamma^{\circ\nu}$ will denote the $\ordi{\nu}$ iterated composition
of $\Gamma$): 
\renewcommand{\theenumi}{{\emph{\roman{enumi}}}}
\begin{enumerate}[leftmargin=27pt]
  \item With $X$ as in Definition~\ref{D:holCorr}, consider two holomorphic correspondences
  \[
    \Gamma^1\,=\,\Add{1\leq j\leq M_1}\!\IVar_{1,\,j} \qquad\text{and}
    \qquad \Gamma^2\,=\,\Add{1\leq k\leq M_2}\!\IVar_{2,\,k}
  \]
  on $X$, written in accordance with the presentation \eqref{E:stdForm_mult}. The support
  of $\Gamma^2\circ \Gamma^1$ is just the classical composition of
  $|\Gamma^2|$ with $|\Gamma^1|$ as \textbf{relations}. Let us denote the latter composition
  by $\star$. If $Y_{s,\,jk}$, $s=1,\dots, M(j, k)$, are the distinct irreducible components of
  $|\IVar_{2,\,k}|\star|\IVar_{1,\,j}|$, then let
  \begin{align*}
    \eta_{s,\,jk}\,:=\,&\text{the number of $y$'s, for a generic $(x,z)\in Y_{s,\,jk}$, such that} \\ 
  					&\text{$(x, y)\in \IVar_{1,\,j}$ \& $(y, z)\in \IVar_{2,\,k}$.}
  \end{align*}
  Then, the definition in \cite[Section~3]{dinhSibony:ubtemc08} results in the formula:
  \[
    \Gamma^2\circ\Gamma^1\,:= \ \sum_{1\leq j\leq M_1}\;\sum_{1\leq k\leq M_2}\;
    								\sum_{1\leq s\leq M(j,\,k)}
  								\eta_{s,\,jk}Y_{s,\,jk}.
  \]
  \item For the semigroup $S$, a choice of a set of generators $\gen$, and the correspondence
  $\Gamma_{\gen}$ introduced in Definition~\ref{D:assoc}, we have
  \[
    \Gamma_{\gen}^{\circ 2}\,:=\,\Gamma_{\gen}
    \circ \Gamma_{\gen}\,=\,\sum\nolimits_{1\leq j,\,k\leq N}{\sf graph}(g_j\circ g_k).
  \]
\end{enumerate}
Observe that if $S$ is not a free semigroup and if, for instance, there exists a relation of the form
$g_{j_1}\circ g_{k_1} = g_{j_2}\circ g_{k_2}$ for $(j_1, k_1)\neq (j_2, k_2)$, then
the irreducible variety ${\sf graph}(g_{j_1}\circ g_{k_1})$ would occur with multiplicity
at least $2$. Observations such as the one above are the reason why the data
$(m_1,\dots, m_N)$ in \eqref{E:stdForm} are essential in defining a holomorphic correspondence.
\smallskip

One can pull back certain types of currents by a holomorphic correspondence\,---\,see
\cite[Section~3.1]{dinhSibony:dvtma06}.
The \emph{formal} prescription for the pullback (which we denote by $F_{\Gamma}^*$)
of any current $T$ of bidegree $(p,p)$, $p=0, 1,\dots, n$ (recall that $\dim_{\C}(X) = n$) is:
\begin{equation}\label{E:prescrip}
  F_{\Gamma}^*(T)\,:=\,(\pi_1)_*\left(\pi_2^*(T)\wedge [\Gamma]\right)
\end{equation}
whenever the intersection $\pi_2^*(T)\wedge [\Gamma]$ makes sense. Here,
$\Gamma$ detemines a current of bidimension $(n,n)$ given by the currents of integration defined by its
constituent subvarieties\,---\,which we denote by $[\Gamma]$. Recall that if $X_1$ and $X_2$
are two compact, connected complex manifolds of dimensions $n_1$ and $n_2$, respectively, $\pi: X_1\lrarw X_2$ is
a holomorphic map, and $T$ is a current on $X_1$ of bidegree $(p,p)$,
$\max(n_1-n_2, 0)\leq p\leq n_1$, then
the push-forward of $T$ by $\pi$ is given by
\[
  \langle \pi_*{T}, \varphi\rangle\,:=\,\langle T, \pi^*\varphi\rangle
  \quad \text{$\forall (n_1-p, n_1-p)$-forms $\varphi$ on $X_2$},
\]
whereby $\pi_*{T}$ is a current of bidegree $(n_2-n_1+p, n_2-n_1+p)$ on $X_2$ (with the
understanding that if $0\leq p< n_1-n_2$, then 
$\pi_*{T}\equiv 0$ for any $(p,p)$-current $T$). With $X_1$, $X_2$ and $\pi$ as before, the pullback of a current 
by $\pi$ is somewhat non-standard. To begin with, one can define a pullback if $\pi$ is a submersion onto $X_2$.
If $\pi$ is not a submersion, then the pullback is defined
for special classes of $(p,p)$-currents on $X_2$. We refer the reader to \cite[Section~2.4]{dinhSibony:dvtma06}
for the definition of the pullback of $T$ by $\pi$ in these various cases. In \eqref{E:prescrip}, the map $\pi_2$
is a submersion and is simple enough that we have
\[
  \langle \pi_2^*(T),\,\varphi\rangle\,:=\,\Big\langle T,\,\int\nolimits_{x\in X}\!\varphi(x, \bcdot)\Big\rangle
  \quad \text{$\forall (2n-p, 2n-p)$-forms $\varphi$ on $X\times X$},
\]
for a $(p,p)$-current $T$, $0\leq p\leq n$.  
\smallskip
    
We consider an example where the intersection of currents in \eqref{E:prescrip} makes sense. Any smooth $(p,p)$-form
$\Theta$ on $X$, $p=0, 1,\dots, n$, can be pulled back by $\Gamma$ to give a $(p,p)$-current (equivalenty,
a current of bidimension $(n-p, n-p)$) as follows:
\[
  \langle F_{\Gamma}^*(\Theta),\,\varphi\rangle\,:=\,\sum_{j=1}^N\,m_j\!\int_{{\sf reg}(\Gamma_j)}\!\!
  								\big(\left. \pi_2\right|_{\Gamma_j}\big)^{\!*}\Theta\wedge
  								\big(\left. \pi_1\right|_{\Gamma_j}\big)^{\!*}\varphi
  								\quad \text{$\forall (n-p, n-p)$-forms $\varphi$,}
\]
using the presentation \eqref{E:stdForm} for $\Gamma$. Now suppose $(X, \omega)$ is a K{\"a}hler manifold
and let $\omega_X$ denote the normalisation of $\omega$ so that $\int_X \omega_X^n = 1$.  
For $p=0, 1,\dots, n$, we define the
\emph{$\ordi{p}$ intermediate degree of $\Gamma$} by
\[
  \lambda_p(\Gamma)\,:=\,\langle F_{\Gamma}^*(\omega_X^p),\,\omega_X^{n-p}\rangle.
\]
It is well known that for each $p$, $\lambda_p$ is sub-multiplicative with respect to composition. Thus,
the limit on the right-hand side below
\begin{equation}\label{E:d_deg}
  d_p(\Gamma)\,:=\,\lim_{\nu\to \infty}\lambda_p(\Gamma^{\circ \nu})^{1/\nu},
  \quad p = 0, 1,\dots, n,
\end{equation}
exists. The number $d_p(\Gamma)$ is called the
\emph{$\ordi{p}$ dynamical degree of $\Gamma$}. Since the limit on the right-hand side of
\eqref{E:d_deg} exists, $d_p(\Gamma^{\circ k}) = d_p(\Gamma)^k$, $p = 0, 1,\dots, n$, for
every $k\in \Z_+$.
\smallskip

With these definitions, we can state a result that we shall need in proving Theorem~\ref{T:main_D-S}
and Corollary~\ref{C:one-dim_D-S}.

\begin{result}[paraphrasing of {\cite[Theorem~1.1]{dinhSibony:ubtemc08}}]\label{R:ent_u-bd}
Let $(X, \omega)$ be a compact K{\"a}hler manifold of dimension $n$ and let $\Gamma$ be a
holomorphic correspondence on $X$. Then
\[
  h_{top}(\Gamma)\,\leq\,\max_{0\leq p\leq n}\log d_p(\Gamma).
\]
\end{result}

We ought to mention that Dinh--Sibony establish the above bound on $h_{top}$ for the more
general class of \emph{meromorphic} correspondences. Furthermore, this bound is actually
obtained\,---\,adapting a technique of Gromov \cite{gromov:ehm03}\,---\,by computing
the value of ${\rm lov}(\Gamma)$, which dominates $h_{top}(\Gamma)$. Roughly
speaking, ${\rm lov}(\Gamma)$ is the asymptotic rate of logarithmic growth (relative to $\nu$) of the
volume of the space of all $\nu$-orbits.   
\smallskip

\section{Notation and essential propositions on topological entropy}\label{S:notn_top_ent}
We begin by fixing some notation that will be needed for the propositions in this section and in subsequent
sections. The objects introduced here will pertain to a \emph{general} holomorphic correspondence
$\Gamma$, and our notation will be with reference to the presentation \eqref{E:stdForm_mult} of $\Gamma$.
\smallskip

We begin by introducing an object similar to $\Gamma^\infty$ of Definition~\ref{D:entropy_F}. The parameter
$M$ has the same meaning in the following definition as in \eqref{E:stdForm_mult}:
\[
  \pthsp{\Gamma}\,:=\,\{(x_0, x_1, x_2,\dots; \alpha_1, \alpha_2,\dots)\in
  X^\N\times \{1,\dots,M\}^{\Z_+}: (x_{\nu-1}, x_{\nu})\in \IVar_{\alpha_\nu} \; \ \forall \nu\in \Z_+\}.
\]
This space is endowed
with the topology that it inherits from $X^{\N}\times \{1,\dots, M\}^{\Z_+}$ endowed with the product
topology. We will denote by $\pth{\Gamma}_{\nu}$ the space of all $\nu$-orbits, i.e.,
\[
  \pth{\Gamma}_{\nu}\,:=\,\{(x_0,\dots x_{\nu}; \alpha_1,\dots, \alpha_{\nu})\in
  X^{\nu+1}\times \{1,\dots,M\}^{\nu}: (x_{j-1}, x_{j})\in \IVar_{\alpha_j}, \; \ 1\leq j\leq \nu\}.
\]
The above is endowed with the relative topology that it inherits from
$X^{\nu+1}\times \{1,\dots,M\}^{\nu}$.
\smallskip

We shall need the following maps. By a mild abuse of notation, we shall denote by $\iti$ either the
map $\iti: \pthsp{\Gamma}\lrarw X^{\N}$ or the map $\iti: \pth{\Gamma}_{\nu}\lrarw X^{\nu+1}$
that maps the relevant orbit of an iteration under $\Gamma$ to the itinerary of points in $X$ along that
orbit. In other words:
\begin{align*}
  \iti: \pthsp{\Gamma}\,\ni (x_0, x_1,\dots; \alpha_1,\dots)\,&\mapsto (x_0, x_1,\dots) \; \;\text{or} \\
  \iti: \pth{\Gamma}_{\nu}\,\ni (x_0, x_1,\dots, x_{\nu};
  \alpha_1,\dots, \alpha_{\nu})\,&\mapsto (x_0, x_1,\dots, x_{\nu}) \; \; \text{respectively,}  
\end{align*}
where the precise definition of $\iti$ will be \textbf{obvious} from the context. By a similar abuse of notation,
we shall denote by $\wrd$ either the
map $\wrd: \pthsp{\Gamma}\lrarw \{1,\dots, M\}^{\Z_+}$ or the map
$\wrd: \pth{\Gamma}_{\nu}\lrarw \{1,\dots, M\}^{\nu}$, defined by
\begin{align*}
  \wrd: \pthsp{\Gamma}\,\ni (x_0, x_1,\dots; \alpha_1,\dots)\,&\mapsto (\alpha_1,\dots) \quad\text{or} \\
  \wrd: \pth{\Gamma}_{\nu}\,\ni (x_0, x_1,\dots, x_{\nu};
  \alpha_1,\dots, \alpha_{\nu})\,&\mapsto (\alpha_1,\dots, \alpha_{\nu}) \; \; \text{respectively.}  
\end{align*}
Lastly, $\sta: \pth{\Gamma}_{\nu}\lrarw X$ will denote the map
$(x_0, x_1,\dots, x_{\nu};\alpha_1,\dots, \alpha_{\nu})\mapsto x_0$. The need to
consider sets of finite and infinite orbits (in ways similar to how
the above objects are used here) has arisen earlier in the literature. E.g.,
formalisms similar to those given here for correspondences are seen in studying surjective
holomorphic self-maps of $\pro^2$ due to the need to consider different prehistories (i.e.,
backward orbits) of a point $x_0\in \pro^2$\,---\,see, e.g., 
\cite{mihaUrba:ttpahdscv04} and \cite{fornaessMiha:emsshmPP13}.%
\smallskip

We shall also need a standard result in elementary topology (also see
Remark~\ref{Rem:f_shift}). We shall abbreviate $\sigma^{\circ j}$ as 
$\sigma^j$.

\begin{lemma}\label{L:basic}
Let $(Y, \met)$ be a compact metric space and let $\wht{\met}$ denote the metric
\[
  \wht{\met}(\wht{x}, \wht{y})\,:=\,\sup_{n\in \N}\frac{\met(x_n, y_n)}{2^n},
\]
$\wht{x}:= (x_0, x_1, x_2,\dots)$ and  $\wht{y}:= (y_0, y_1, y_2,\dots)$, which metrises
the product topology on $Y^\N$. Let $\sigma: (x_0, x_1, x_2,\dots) \longmapsto
(x_1, x_2, x_3,\dots)$ be the left-shift on $Y^\N$. Then:
\[
  \max_{0\leq j\leq n}\wht{\met}\big(\sigma^{j}(\wht{x}),
  \sigma^{j}(\wht{y})\big)\,=\,\sup_{j\in \N}\frac{\met(x_j, y_j)}{2^{(j-n)+}},
\]
where $(j-n)+ := \max(j-n, 0)$.
\end{lemma}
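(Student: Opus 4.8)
The plan is to unwind both sides into a supremum of the rescaled coordinate distances $\met(x_i,y_i)$ and then match them pair-by-pair. First I would compute the iterated shift explicitly, $\sigma^k(\wht{x}) = (x_k, x_{k+1}, x_{k+2},\dots)$, and substitute into the definition of $\wht{\met}$ to obtain
\[
  \wht{\met}\big(\sigma^k(\wht{x}),\, \sigma^k(\wht{y})\big)\,=\,\sup_{m\in\N}\frac{\met(x_{k+m},\, y_{k+m})}{2^m}\,=\,\sup_{i\geq k}\frac{\met(x_i,\, y_i)}{2^{i-k}},
\]
where the last equality is the reindexing $i=k+m$. Abbreviating $a_i:=\met(x_i, y_i)\geq 0$, the left-hand side of the asserted identity is thus $\max_{0\leq k\leq n}\sup_{i\geq k} a_i/2^{i-k}$.

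Next I would observe that a maximum of finitely many suprema, taken over the index sets $\{i: i\geq k\}$, is simply the supremum of $a_i/2^{i-k}$ over all admissible pairs $(k,i)$ with $0\leq k\leq n$ and $k\leq i$. Since every coordinate $i\in\N$ admits at least the choice $k=0$, this lets me reverse the order of the two operations and maximise over $k$ first, for each fixed $i$:
\[
  \max_{0\leq k\leq n}\,\sup_{i\geq k}\frac{a_i}{2^{i-k}}\,=\,\sup_{i\in\N}\;\max_{0\leq k\leq \min(n,i)}\frac{a_i}{2^{i-k}}.
\]
The only computation is the inner maximisation. For fixed $i$ the quantity $a_i/2^{i-k}$ is nondecreasing in $k$, so it is largest at the greatest admissible value $k=\min(n,i)$; this gives exponent $i-\min(n,i)=(i-n)+$, whence the inner maximum equals $a_i/2^{(i-n)+}$ and the whole expression collapses to $\sup_{i\in\N} a_i/2^{(i-n)+}$, which is the right-hand side.

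I expect no genuine obstacle: the argument is elementary and its content is bookkeeping. Should the max/sup interchange above feel too slick, I would instead prove the two inequalities directly. For ``$\leq$'', one checks that $i-k\geq (i-n)+$ whenever $0\leq k\leq n$ and $i\geq k$ (splitting into the cases $i\leq n$ and $i>n$), so every term on the left is bounded above by the corresponding term $a_i/2^{(i-n)+}$; for ``$\geq$'', for each $i$ one selects $k=\min(n,i)\in\{0,\dots,n\}$ to exhibit $a_i/2^{(i-n)+}$ as one of the terms contributing to the left-hand maximum. The main point requiring care is simply keeping the shift index $k$ distinct from the coordinate index $i$ and correctly handling the two cases that define $(i-n)+$.
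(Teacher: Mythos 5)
Your proof is correct. The paper states Lemma~\ref{L:basic} without proof, presenting it as a standard fact of elementary topology, and your argument\,---\,expanding $\wht{\met}\big(\sigma^{k}(\wht{x}),\sigma^{k}(\wht{y})\big)$ as $\sup_{i\geq k}\met(x_i,y_i)/2^{i-k}$, rewriting the max of sups as a single supremum over admissible pairs $(k,i)$, and optimising over $k=\min(n,i)$ for each fixed $i$ to produce the exponent $(i-n)+$\,---\,is precisely the bookkeeping the authors leave to the reader.
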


Before stating the principal result of this section, we provide a couple of clarifications.
In what follows, if $Y$ is a compact metric space and $f: Y\lrarw Y$ is a continuous map, then
$h(f)$ will denote its topological entropy in the sense of Bowen. We shall define this in the
setting of compact metric spaces, along with a remark on the broader concept introduced by 
Bowen in \cite{bowen:egehp71}. Since the phrase ``$(\eps, \nu)$-separated'' appears in 
Definition~\ref{D:entropy_D-S} in a (slightly) different context, we shall\,---\,to avoid
confusion\,---\,use wording that is slightly different from that in \cite{bowen:egehp71}.

\begin{definition}\label{D:Bowen_entropy}
Let $(Y, \met)$ be a compact metric space and let $f: Y\lrarw Y$ be a continuous map.
A set $\Fam$ of orbits of $f$ of duration $n$, $n\in \Z_+$, is said to be an \emph{$\eps$-separated
set of orbits of $f$ of duration $n$}, $\eps>0$, if for all pairs of distinct orbits
\[
  (x_0, x_1,\dots, x_{n}) \quad\text{and}
    \quad (y_0, y_1,\dots, y_{n})
\]
of $\Fam$ (i.e., $x_j=f^j(x_0)$, $y_j=f^j(y_0)$, $j=1,\dots,n$)
\[
  \met(x_j, y_j)\,>\,\eps \text{ for some $j=0,1,\dots, n$}.
\]
Let $M(\eps, n)$ denote the greatest possible cardinality of any $\eps$-separated
set $\Fam$ of orbits of $f$ of duration $n$. Then, the \emph{topological entropy of
$f$}, denoted by $h(f)$, is defined as
\[
  h(f)\,:=\,\sup_{\eps>0}\,\limsup_{n\to \infty}\frac{1}{n}\log M(\eps, n).
\]
\end{definition}

\begin{remark}\label{Rem:more_Bowen}
In \cite{bowen:egehp71}, Bowen gives a definition of entropy that does not require $Y$ to be compact. In that
case, $f$ must be uniformly continuous relative to the metric on $Y$, and the value of Bowen's entropy of
$f$ depends on this metric. But when $Y$ is compact, Bowen's entropy is independent of the
metric, provided it metrises the topology on $Y$. This is the special case of the broader framework in
\cite{bowen:egehp71} that we focus on in Definition~\ref{D:Bowen_entropy}. 
\end{remark}

\begin{remark}\label{Rem:f_shift}
In Definition~\ref{D:entropy_F}, where $h_F(\Gamma)$ is defined as $h(\sigma)$
for the shift map $\sigma: \Gamma^\infty\lrarw \Gamma^\infty$, we now see that the
latter is given by Definition~\ref{D:Bowen_entropy} with $Y=\Gamma^\infty$ and
$f=\sigma$. This, in fact, is the motivation for Lemma~\ref{L:basic}.
\end{remark}

We now state and prove a result that will be needed in the proof of Theorem~\ref{T:F_entr}.
This result is hinted at in \cite[Section~4]{dinhSibony:ubtemc08}. However:
\begin{itemize}[leftmargin=24pt]
  \item It is unclear if Proposition~\ref{P:ent_ed} follows, as alluded to in \cite{dinhSibony:ubtemc08},
  from the conjugacy invariance of topological
  entropy (which applies to pairs of maps).
  \vspace{0.8mm}
  
  \item For $\Gamma$ as in Definition~\ref{D:holCorr}, if the topological degree of
  $\big(\left.\pi_1\right|_{\Gamma_j}\big)\geq 2$ for any $j\in \{1,\dots, N\}$, then it is unclear
  whether $\Gamma$ can at all be conjugated to a shift on $\pthsp{\Gamma}$.
\end{itemize} 
While we shall apply Proposition~\ref{P:ent_ed} only to the holomorphic correspondence
$\Gamma_{\gen}$ in Section~\ref{S:main_F}, it holds true for general holomorphic
correspondences. It may thus be of independent interest. In view of the two points
above, it seems worthwhile to state and give a \emph{direct} proof of

\begin{proposition}\label{P:ent_ed}
Let $X$ and $\Gamma$ be as in Definition~\ref{D:holCorr}. For $\eps > 0$ and $\nu\in \Z_+$,
let
\begin{align*}
  N(\eps, \nu)\,:=&\,\text{the cardinality of any $(\eps, \nu)$-separated family of $\nu$-orbits, in the} \\
  &\,\text{sense of Definition~\ref{D:entropy_D-S}, having the greatest possible cardinality.}
\end{align*}
Let $\sh$ denote the restriction of the shift map
\[
  \sigma: (x_0, x_1, x_2,\dots; \alpha_1, \alpha_2,\dots)\mapsto
  (x_1, x_2, x_3,\dots; \alpha_2, \alpha_3,\dots)
\]
to $\pthsp{\Gamma}$. Then,
\[
   h_{top}(\Gamma)\,:=\,\sup_{\eps>0}\,\limsup_{\nu\to \infty}\frac{1}{\nu}
   \log{N(\eps, \nu)}\,=\,h(\sh),
\]
where $h(\sh)$ is the entropy, in the sense of Bowen, of the continuous map $\sh: \pthsp{\Gamma}
\lrarw \pthsp{\Gamma}$.
\end{proposition}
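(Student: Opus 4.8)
The plan is to realise $\sh$ as a genuine one-sided subshift and then compare the two entropies by directly counting separated sets, thereby sidestepping the unavailable ``conjugacy of maps'' argument flagged before the statement. Put $Y:=X\times\{1,\dots,M\}$, fix a constant $c_0>\mathrm{diam}_d(X)$ (with $d$ the metric on $X$ fixed in Definition~\ref{D:entropy_D-S}), and equip $Y$ with the metric $\met$ given by $\met((x,\alpha),(y,\beta))=d(x,y)$ when $\alpha=\beta$ and $\met((x,\alpha),(y,\beta))=c_0$ otherwise. The assignment $(x_0,x_1,\dots;\alpha_1,\alpha_2,\dots)\mapsto(z_0,z_1,\dots)$ with $z_k:=(x_k,\alpha_{k+1})$ identifies $\pthsp{\Gamma}$ with a closed, hence compact, shift-invariant subset of $Y^{\N}$ and conjugates $\sh$ to the left-shift $\sigma$ on that subset. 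Since $\pthsp{\Gamma}$ is compact, Remark~\ref{Rem:more_Bowen} permits me to compute $h(\sh)$ with the metric $\wht{\met}$ of Lemma~\ref{L:basic}, which yields the explicit dynamical metric
\[
  \max_{0\le j\le n}\wht{\met}\big(\sh^{j}\wht{p},\sh^{j}\wht{q}\big)
  \,=\,\sup_{k\in\N}\frac{\met(z_k,w_k)}{2^{(k-n)+}},
\]
for $\wht{p},\wht{q}\in\pthsp{\Gamma}$ with coordinate sequences $(z_k)$ and $(w_k)$. The crucial consequence is that, at resolution $\eps$, this metric only ``sees'' the coordinates $z_k$ with $k\le n+L$, where $L=L(\eps):=\lceil\log_2(c_0/\eps)\rceil$, because the bound $\met\le c_0$ forces the geometric tail weights to wash out.

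With $M(\eps,n)$ denoting the Bowen separated-set count of Definition~\ref{D:Bowen_entropy} for $f=\sh$, I would establish two comparisons for each $0<\eps<c_0$. For $h_{top}(\Gamma)\le h(\sh)$: given an $(\eps,\nu)$-separated family $\Fam$ of $\nu$-orbits, I extend each member to an infinite orbit in $\pthsp{\Gamma}$ (possible since condition~(1) of Definition~\ref{D:holCorr} guarantees every point has an outgoing edge), and observe that any witness of $(\eps,\nu)$-separation, namely a point-coordinate with $d(x_j,y_j)>\eps$, $0\le j\le\nu$, or a symbol $\alpha_j\ne\beta_j$, $1\le j\le\nu$, forces $\met(z_k,w_k)>\eps$ for some $k\le\nu$. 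By the displayed formula with $n=\nu$ the extended orbits form an $\eps$-separated set of $\sh$-orbits of duration $\nu$, giving $M(\eps,\nu)\ge N(\eps,\nu)$. For the reverse inequality $h(\sh)\le h_{top}(\Gamma)$: given an $\eps$-separated set of $\sh$-orbits of duration $n$, the displayed formula together with $\met\le c_0$ shows that each separating pair is already separated at some coordinate $k\le n+L$, so restricting the infinite orbits to $\nu$-orbits with $\nu:=n+L+1$ is injective and produces an $(\eps,\nu)$-separated family, whence $N(\eps,n+L+1)\ge M(\eps,n)$. Dividing by $\nu$ resp.\ $n$, passing to $\limsup$ (using that $L$ depends only on $\eps$, so that $\nu/n\to 1$), and finally taking $\sup$ over $0<\eps<c_0$ (harmless, since both $N(\eps,\nu)$ and $M(\eps,n)$ are non-increasing in $\eps$, so the suprema over $\eps>0$ are attained as $\eps\to0^+$) yields the two opposite inequalities, hence the asserted equality.

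I expect the main obstacle to be the bookkeeping forced by the geometric weights in $\wht{\met}$: unlike a subshift on a finite alphabet, here the $X$-factor is merely a compact metric space, so the dynamical metric at scale $\eps$ registers a window of coordinates of length $n+L(\eps)$ rather than exactly $n$, and one must verify that this $\eps$-dependent but $n$-independent overhang does not perturb the exponential growth rate. The formula supplied by Lemma~\ref{L:basic} is precisely the tool that controls this. A secondary point demanding care is the index shift between the $\nu+1$ points and the $\nu$ symbols of a $\nu$-orbit against the pairing $z_k=(x_k,\alpha_{k+1})$, together with the (routine) checks that orbit-extension and orbit-restriction are injective on the relevant families. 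Notably, no complex-analytic structure is used beyond the surjectivity hypotheses of Definition~\ref{D:holCorr}, in keeping with the assertion that the proposition holds for general holomorphic correspondences.
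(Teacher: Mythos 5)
Your proposal is correct and follows essentially the same route as the paper's proof: both directions are obtained by extending $\nu$-orbits to infinite orbits (giving $M(\eps,\nu)\geq N(\eps,\nu)$) and, conversely, truncating $\eps$-separated infinite orbits at an index $\nu+C(\eps)$ beyond which the geometric weights force all terms below $\eps$, with the $\eps$-dependent, $\nu$-independent overhang absorbed in the $\limsup$. The only (cosmetic) difference is that you package the point and symbol coordinates into a single factor $Y=X\times\{1,\dots,M\}$ with a hybrid metric, whereas the paper keeps them separate and takes a maximum of two weighted suprema; the two metrics metrise the same topology, so by the metric-independence of Bowen's entropy on compact spaces the arguments coincide.
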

\begin{proof}
Let us fix a metric $d$ on the complex manifold $X$ that is compatible with the manifold topology.
We choose the metric
\[
  \Delta\big( (x_0, x_1,\dots ; \alpha_1,\dots), 
  (y_0, y_1,\dots ; \beta_1,\dots) \big)
  :=\,\max\left[\,\sup_{\nu\in \N}\frac{d(x_{\nu}, y_{\nu})}{2^{\nu}}, \ 
  \sup_{\nu\in \N}\frac{\kd(\alpha_{\nu+1}, \beta_{\nu+1})}{2^{\nu}}\,\right]
\]
(where $\kd$ denotes the $0$-$1$ metric on the symbols $\{1,\dots, M\}$) which metrises the topology
on $\pthsp{\Gamma}$. Since we must
show that $h_{top}(\Gamma)$ equals the Bowen entropy of $\sh$, we introduce, for $\eps > 0$
and $\nu\in \Z_+$, the set
\begin{align*}
  M(\eps, \nu)\,:=&\,\text{the cardinality of any $\eps$-separated set of orbits of $\sh$} \\
  &\,\text{of duration $\nu$ having the greatest possible cardinality.}
\end{align*}
Recall that any two orbits,
\begin{equation}\label{E:notation}
  \mathscr{O}_1\,:=\,(x_0, x_1,\dots ; \alpha_1,\dots) \quad\text{and}
  \quad \mathscr{O}_2\,:=\,(y_0, y_1,\dots ; \beta_1,\dots),
\end{equation}
belonging to any of the sets referred to in the definition of $M(\eps, \nu)$ satisfy
\begin{equation}\label{E:sep_nu}
  \max_{0\leq j\leq \nu}\Delta(\sh^j\big(\mathscr{O}_1), \sh^j(\mathscr{O}_2)\big)\,>\,\eps.
\end{equation}
Fix a $\nu\in \Z_+$. It suffices to consider $\eps\in (0, 1)$.
\smallskip

Let $S(\eps, \nu)\subset \pth{\Gamma}_{\nu}$ be an $(\eps, \nu)$-separated family, in
the sense of Definition~\ref{D:entropy_D-S}, such that $\sharp{S(\eps, \nu)} = N(\eps, \nu)$.
For each $\nu$-orbit $\bsy{x}\in S(\eps, \nu)$,
let us pick a $(x_0, x_1, x_2,\dots ; \alpha_1, \alpha_2\dots)\in \pthsp{\Gamma}$ such that
\[
  (x_0, x_1,\dots, x_\nu; \alpha_1,\dots, \alpha_\nu)\,=\,\bsy{x},
\]
and \textbf{fix} it. Call the latter infinite orbit $\widetilde{\bsy{x}}$. Let us
consider two distinct $\nu$-orbits
\[
  \bsy{x}\,:=\,(x_0, x_1,\dots, x_\nu; \alpha_1,\dots, \alpha_\nu) \quad\text{and}
  \quad\bsy{y}\,:=\,(y_0, y_1,\dots, y_\nu; \beta_1,\dots, \beta_\nu)
\]
belonging to $S(\eps, \nu)$. We have two possibilities for the pair $\{\bsy{x}, \bsy{y}\}$:
\medskip

\noindent{\textbf{Case~1.} $\max_{\,0\leq j\leq \nu}d(x_j, y_j) > \eps$.}
\vspace{0.6mm}

\noindent{Then (with the meaning of $\widetilde{\bsy{y}}$ hopefully being clear) by the
definition of $\Delta$, and in view of Lemma~\ref{L:basic}, we have
\begin{equation}\label{E:eps1}
  \max_{0\leq j\leq \nu}\Delta\big(\sh^{j}(\widetilde{\bsy{x}}),
  \sh^{j}(\widetilde{\bsy{y}})\big)\,\geq\,\sup_{j\in \N}\frac{d(x_j, y_j)}{2^{(j-\nu)+}}\,>\,\eps.
\end{equation}}
\medskip

\noindent{\textbf{Case~2.} $\max_{\,0\leq j\leq \nu}d(x_j, y_j)\leq \eps$.}
\vspace{0.6mm}

\noindent{In this case, by \eqref{E:eps-nu_sep} there exists a $j^*$, with $1\leq j^*\leq \nu$,
such that $\alpha_{j^*}\neq \beta_{j^*}$. Therefore, in view of Lemma~\ref{L:basic}, we have
\begin{equation}\label{E:eps2}
  \max_{0\leq j\leq \nu}\Delta\big(\sh^{j}(\widetilde{\bsy{x}}),
  \sh^{j}(\widetilde{\bsy{y}})\big)\,\geq\,\sup_{j\in \N}
  \frac{\kd(\alpha_{j+1}, \beta_{j+1})}{2^{(j-\nu)+}}\,\geq\,1\,>\,\eps.
\end{equation}}
\medskip

From \eqref{E:eps1} and \eqref{E:eps2} it follows that the set
$\{\widetilde{\bsy{x}}\in \pth{\Gamma}: \bsy{x}\in S(\eps, \nu)\}$ is an
$\eps$-separated set of orbits of $\sh$ in the sense of \eqref{E:sep_nu}.
Since the latter set has cardinality $N(\eps, \nu)$, we get:
\begin{equation}\label{E:sizes1}
  M(\eps, \nu)\,\geq\,N(\eps, \nu).
\end{equation}

Now let $\Sigma(\eps, \nu)\subset \pthsp{\Gamma}$ be an $\eps$-separated set of orbits in the
sense of \eqref{E:sep_nu} such that $\sharp\Sigma(\eps, \nu) = M(\eps, \nu)$. Then, for
two distinct orbits $\mathscr{O}_1, \mathscr{O}_2\in \Sigma(\eps, \nu)$, we have (using the
notation in \eqref{E:notation}):
\begin{align}
  \frac{d(x_j, y_j)}{2^{(j-\nu)+}}\,&\leq\,\eps \quad \forall j\geq \nu+\log_2(1/\eps)+\log_2({\rm diam}(X)),
  \label{E:small1} \\
  \frac{\kd(\alpha_{j+1}, \beta_{j+1})}{2^{(j-\nu)+}}\,&\leq\,\eps \quad \forall j\geq \nu + \log_2(1/\eps), \label{E:small2}
\end{align}
where $\log_2(t) := \log(t)/\log(2) \ \forall t>0$. 
Given the definition of the metric $\Delta$, it is impossible for the quantities
\[
  \max_{0\leq j\leq \nu}\,\sup_{k\in \N}\frac{d\big(\pi_k\circ\iti(\sh^j(\mathscr{O}_1)),
  \pi_k\circ\iti(\sh^j(\mathscr{O}_2))\big)}{2^k},
\]
and
\[
  \max_{0\leq j\leq \nu}\,\sup_{k\in \Z_+}\frac{\kd\big(\pi_k\circ\wrd(\sh^j(\mathscr{O}_1)),
  \pi_k\circ\wrd(\sh^j(\mathscr{O}_2))\big)}{2^{k-1}}
\]
(where $\pi_k$ denotes the projection onto the $\ordi{k}$ factor) to \textbf{both} be less than or equal to $\eps$.
Thus, by \eqref{E:small1}, \eqref{E:small2} and Lemma~\ref{L:basic}, we have
\[
  d(x_j, y_j)\,>\,2^{(j-\nu)+}\eps \quad\text{or}
  \quad\kd(\alpha_{j+1}, \beta_{j+1})\,\neq\,0 \quad\text{for some $j: 0\leq j\leq C(\eps)+\nu$,}
\]
where $C(\eps)$ is the greatest integer that is strictly less than $\log_2(1/\eps) +\log_2({\rm diam}(X))$.
Hence
\[
  (x_0, x_1\dots, x_{C(\eps)+\nu}; \alpha_1,\dots, \alpha_{C(\eps)+\nu}) \quad\text{and}
 \quad(y_0, y_1\dots, y_{C(\eps)+\nu}; \beta_1,\dots, \beta_{C(\eps)+\nu})
\]
are $(\eps, C(\eps)+\nu)$-separated in the sense of Definition~\ref{D:entropy_D-S}.
Since this applies
to any pair of distinct $\mathscr{O}_1, \mathscr{O}_2\in \Sigma(\eps, \nu)$, we get
\[
  N(\eps, C(\eps) + \nu)\,\geq\,M(\eps, \nu).
\]
From this and \eqref{E:sizes1}, it follows that
\[
  N(\eps, \nu)\,\leq\,M(\eps, \nu)\,\leq\,N(\eps, C(\eps)+\nu).
\]
From the above, and from the definitions of the numbers $M(\eps, \nu)$ and $N(\eps, \nu)$,
the result is now immediate.
\end{proof}

We end this section revisiting Bowen's entropy. 
It will be needed in the proof of Theorem~\ref{T:F_entr}. To state it, we need
some terminology. Let $(Y,D)$ be as in Lemma~\ref{L:basic} and let $f: Y\lrarw Y$
be a continuous map. Let $K\subseteq Y$. Given $\eps>0$ and $n\in \Z_+$, a
subset $\Fam\subset Y$ is said to \emph{$(\eps, n)$-span $K$ with respect to $f$} if for
each $x\in K$ there exists a $y\in \Fam$ so that
\[
  D\big(f^{j}(x), f^{j}(y)\big)\,\leq\,\eps \quad \forall j=0,1,\dots,n-1.
\]
Let $r_n(\eps, K):= \inf\{\sharp\Fam : \Fam\subset Y
\text{ $(\eps, n)$-spans $K$}\}$. If $K$ is compact, then, clearly, $r_n(\eps, K)$
is finite for any $\eps>0$ and $n\in \Z_+$. Now, define:
\begin{equation}\label{E:f_K}
  h(f, K)\,:=\,\lim_{\eps\to 0^+}\,\limsup_{n\to \infty}\frac{1}{n}\log\big(r_n(\eps, K)\big).
\end{equation}
We must admit that, in the above discussion, we are omitting a considerable amount of
context. For instance, the quantity $h(f, K)$ is an ingredient in the definition of Bowen's
entropy, which\,---\,as mentioned in Remark~\ref{Rem:more_Bowen}\,---\,does
not require $Y$ to be compact. Before we state the result that we need, we must mention
that as $Y$ above is compact, $h(f, K)$ does not depend on the choice of $D$ (provided
it metrises the topology on $Y$): see the proof of \cite[Proposition~3]{bowen:egehp71}.

\begin{result}[Bowen, {\cite[Theorem~17]{bowen:egehp71}}]\label{R:bowen_quots}
Let $(Y_i, d_i)$, $i = 1,2$, be two compact metric spaces. Let $f_i : Y_i\lrarw Y_i$, $i = 1, 2$, be
continuous surjective maps. Let $\pi : Y_1\lrarw Y_2$ be a continuous surjective map such that
$\pi\circ f_1 = f_2\circ\pi$. Then
\[
  h(f_2)\,\leq\,h(f_1)\,\leq\,h(f_2) + \sup_{y\in Y_2}h(f_1,\,\pi^{-1}\{y\}).
\]  
\end{result}

\section{The proof of Theorem~\ref{T:main_D-S}}\label{S:main_D-S}
This section will chiefly be devoted to Theorem~\ref{T:general_bounds} below.
Theorem~\ref{T:main_D-S} would follow as its corollary. But before we can prove
Theorem~\ref{T:general_bounds}, we must present an auxiliary quantity and a
lemma. To do so, let $X$, $\Gamma$ and $d$ be as in Definition~\ref{D:entropy_D-S}.
For a given $\nu\in \Z_+$, \textbf{fix} a $\nu$-tuple $\wod :=
(\alpha_1,\dots, \alpha_{\nu})\in \{1,\dots, M\}^{\nu}$. We say that a family
$\Fam$ of $\nu$-orbits is $(\eps, \wod)$-separated if for all pairs of distinct elements
\[
  (x_0, x_1,\dots, x_{\nu}; \beta_1,\dots, \beta_{\nu}) \quad\text{and}
  \quad (y_0, y_1,\dots, y_{\nu}; \gamma_1,\dots, \gamma_{\nu})
\]
of $\Fam$, we have
\begin{itemize}[leftmargin=24pt]
  \item $(\beta_1,\dots, \beta_{\nu}) = \wod = (\gamma_1,\dots, \gamma_{\nu})$; and
  \item $\max_{0\leq j\leq \nu}d(x_j, y_j) > \eps$.
\end{itemize}

\begin{lemma}\label{L:sum-up}
Let $X$ and $\Gamma$ be as in Definition~\ref{D:entropy_D-S}.
For $\eps > 0$, $\nu\in \Z_+$ and $\wod\in \{1,\dots, M\}^{\nu}$, let
\begin{align*}
  n(\eps, \wod)\,:=&\,\text{the cardinality of any $(\eps, \wod)$-separated family of} \\
  &\,\text{$\nu$-orbits having the greatest possible cardinality.}
\end{align*}
Then,
\[
   h_{top}(\Gamma)\,=\,\sup_{\eps>0}\,\limsup_{\nu\to \infty}\frac{1}{\nu}
   \log\Bigg[\sum_{\wod\in \{1,\dots, M\}^{\nu}}\!\!n(\eps, \wod) \Bigg].
\]
\end{lemma}

\noindent{The proof of this lemma is extremely elementary.
But since it is vital to the proof of Theorem~\ref{T:general_bounds}, we provide the following}
\smallskip

\noindent{\emph{Outline of proof.} Fix an $\eps > 0$ and $\nu\in \Z_+$. For
$\wod\in \{1,\dots, M\}$, let $\Fam(\wod)$ be an $(\eps, \wod)$-separated
family such that $\sharp\Fam(\wod) = n(\eps, \wod)$. Write
\[
  \Fam\,:=\,\bigcup_{\wod\in \{1,\dots, M\}^{\nu}}\Fam(\wod).
\]
The lemma follows from the fact that $\Fam$ is an $(\eps, \nu)$-separated family
and that $\sharp\Fam = N(\eps, \nu)$\,---\,where $N(\eps, \nu)$ is as introduced
in Proposition~\ref{P:ent_ed}. Both these statements follow from the definitions and
the fact that if
\[
  (x_0, x_1,\dots, x_{\nu}; \alpha_1,\dots, \alpha_{\nu})\in
  \Fam\big((\alpha_1,\dots, \alpha_{\nu})\big), \quad
  (y_0, y_1,\dots, y_{\nu}; \beta_1,\dots, \beta_{\nu})\in
  \Fam\big((\beta_1,\dots, \beta_{\nu})\big),
\]
and $(\alpha_1,\dots, \alpha_{\nu})\neq (\beta_1,\dots, \beta_{\nu})$ then
these two $\nu$-orbits are $(\eps, \nu)$-separated in the sense of
Definition~\ref{D:entropy_D-S}. \hfill $\Box$}
\smallskip

We can now present the central result of this section. In what follows, $d_{top}$ will
denote the topological degree, while for a surjective holomorphic map
$f : X\lrarw X$, $X$ a compact K{\"a}hler manifold, $d_p(f)$ will
denote the $\ordi{p}$ dynamical degree of ${\sf graph}(f)$ (see
Section~\ref{S:prelim_complex}). We must also spell out what is meant by
the topological degree of a holomorphic correspondence. 
Let $X_1$ and $X_2$ be as in Definition~\ref{D:holCorr} and $\Gamma$ be a holomorphic correspondence
from $X_1$ to $X_2$. Representing $\Gamma$ as in
\eqref{E:stdForm}, it is classical that there is a Zariski-open
set $W\subset X_2$ and $d_j\in \Z_+$ such that $(\pi_2^{-1}(W)\cap \Gamma_j, W, \pi_2)$
is a $d_j$-sheeted covering.
The \emph{topological degree of $\Gamma$} is defined as
\[
  d_{top}(\Gamma)\,:=\,\sum\nolimits_{1\leq j\leq N} m_j\,d_j.
\]
In other words, $d_{top}(\Gamma)$ is the generic number of preimages of a point
counted according to multiplicity. If $X_1=X_2=X$ and $\Gamma = {\sf graph}(f)$, where
$f: X\lrarw X$ is a surjective holomorphic map, then the latter definition applied to ${\sf graph}(f)$
coincides with the classical definition of the topological degree of $f$.
\smallskip

One half of our proof of the following theorem
is strongly influenced by the derivation by Misiurewicz and Przytycki
\cite{misiurewiczPrzytycki:tedsm77} of a lower bound for topological
entropy of a single map in the $\smoo^1$ setting. Our notation below
follows the treatment of the above result by Katok--Hasselblatt in
\cite[Chapter~8]{katokHasselblatt:imtds95}. 

\begin{theorem}\label{T:general_bounds}
Let $(X, \omega)$ be a compact K{\"a}hler manifold of dimension $n$ and let  
$S$ be a finitely generated semigroup consisting of surjective holomorphic self-maps of $X$.
Let $\gen = \{f_1,\dots, f_N\}$ be a set of generators of $S$.
Then
\begin{align}
  \log\bigg(\sum\nolimits_{j=1}^N d_{top}(f_j)\bigg)\,&\leq\,h_{top}(S, \gen) \notag \\
  &\leq\,\max\bigg[\log(N),\,\log\bigg(\sum\nolimits_{j=1}^N d_{top}(f_j)\bigg),\,\max_{1\leq p\leq n-1}
  \log d_p(\Gamma_{\gen})\,\bigg]. \label{E:key_e_bounds}
\end{align}
\end{theorem}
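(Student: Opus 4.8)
The plan is to treat the two inequalities in \eqref{E:key_e_bounds} separately: the upper bound follows almost immediately from Result~\ref{R:ent_u-bd} once the two extreme dynamical degrees of $\Gamma_{\gen}$ are identified, while the lower bound carries the real work and is where the Misiurewicz--Przytycki technique enters.

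\emph{The upper bound.} Since the generators are distinct, $\Gamma_{\gen}=\sum_{1\le j\le N}{\sf graph}(f_j)$ has all multiplicities equal to $1$, so $M=N$ in the presentation \eqref{E:stdForm_mult}. I would first compute the two extreme dynamical degrees. Using the pullback formula for smooth forms together with the fact that each $\pi_1|_{{\sf graph}(f_j)}$ is a biholomorphism, one finds $\lambda_0(\Gamma_{\gen})=N$ and, more generally, $\lambda_0(\Gamma_{\gen}^{\circ\nu})=N^{\nu}$ (there are $N^{\nu}$ words, each contributing, counted with multiplicity, a degree-one sheet of $\pi_1$), whence $d_0(\Gamma_{\gen})=N$. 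At the other end, by the definition of the topological degree of a correspondence, $\lambda_n(\Gamma_{\gen})=d_{top}(\Gamma_{\gen})=\sum_{j}d_{top}(f_j)$, and since $d_{top}(\Gamma^{\circ k})=d_{top}(\Gamma)^{k}$, one gets $d_n(\Gamma_{\gen})=\sum_{j}d_{top}(f_j)=:\mathsf{D}$. Feeding these into Result~\ref{R:ent_u-bd} and peeling off the $p=0$ and $p=n$ terms from $\max_{0\le p\le n}\log d_p(\Gamma_{\gen})$ reproduces exactly the right-hand side of \eqref{E:key_e_bounds}.

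\emph{The lower bound: reduction.} Here I would begin from Lemma~\ref{L:sum-up}, which rewrites $h_{top}(\Gamma_{\gen})=\sup_{\eps>0}\limsup_{\nu\to\infty}\tfrac1\nu\log\big[\sum_{\wod}n(\eps,\wod)\big]$, the sum running over words $\wod=(\alpha_1,\dots,\alpha_\nu)\in\{1,\dots,N\}^{\nu}$. The value of this presentation is that, for $\Gamma_{\gen}$, a $\nu$-orbit carrying a \emph{fixed} word $\wod$ is determined by its starting point: writing $\Phi_{\wod,j}:=f_{\alpha_j}\circ\cdots\circ f_{\alpha_1}$, the orbit is $\big(x_0,\Phi_{\wod,1}(x_0),\dots,\Phi_{\wod,\nu}(x_0)\big)$. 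Thus $n(\eps,\wod)$ is precisely the maximal cardinality of a subset of $X$ whose points are pairwise separated in the (non-autonomous) Bowen metric attached to the sequence $\Phi_{\wod,0},\dots,\Phi_{\wod,\nu}$. I would then target the per-word bound
\[
  n(\eps,\wod)\,\ge\,\frac{d_{top}(\Phi_{\wod,\nu})}{C(\eps)^{\nu}}\,=\,\frac{\prod_{j=1}^{\nu}d_{top}(f_{\alpha_j})}{C(\eps)^{\nu}},
\]
with $C(\eps)\to 1$ as $\eps\to0^{+}$ and $C(\eps)$ independent of $\wod$. Granting this, summing over all $N^{\nu}$ words and using $\sum_{\wod}\prod_{j}d_{top}(f_{\alpha_j})=\big(\sum_{k}d_{top}(f_k)\big)^{\nu}=\mathsf{D}^{\nu}$ gives $\sum_{\wod}n(\eps,\wod)\ge(\mathsf{D}/C(\eps))^{\nu}$; letting first $\nu\to\infty$ and then $\eps\to0^{+}$ yields $h_{top}(\Gamma_{\gen})\ge\log\mathsf{D}$, which is the lower bound in \eqref{E:key_e_bounds}.

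\emph{The per-word estimate and the main obstacle.} For the displayed per-word bound I would adapt the Misiurewicz--Przytycki argument, in the form of \cite[Chapter~8]{katokHasselblatt:imtds95}, to the composition $\Phi_{\wod,\nu}$. Fix a generic value $z\in X$; its fibre under $\Phi_{\wod,\nu}$ has exactly $d_{top}(\Phi_{\wod,\nu})=\prod_{j}d_{top}(f_{\alpha_j})$ points, a count kept exact in the holomorphic category by the identity $\int_X(\Phi_{\wod,\nu})^{*}\omega^{n}=d_{top}(\Phi_{\wod,\nu})\int_X\omega^{n}$. Taking a maximal $(\eps,\wod)$-separated subfamily of the orbits issuing from this fibre, the associated Bowen balls span the fibre, so $n(\eps,\wod)$ is at least the fibre-count divided by the largest number of fibre-points a single Bowen ball can hold. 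The main obstacle is precisely that last quantity: one must show that an $(\eps,\nu)$-Bowen ball for the sequence $\Phi_{\wod,j}$ meets $\Phi_{\wod,\nu}^{-1}\{z\}$ in at most $C(\eps)^{\nu}$ points, with $C(\eps)\to1$ as $\eps\to0$, and\,---\,crucially\,---\,\emph{uniformly in the word} $\wod$, since any word-dependence in the error would be magnified on summing over $N^{\nu}$ words and could swamp the $\mathsf{D}^{\nu}$. I expect this to follow from the Lipschitz control afforded by the finitely many generators (a uniform modulus of continuity and a uniform bound on local volume distortion), which limits, at scale $\eps$, the folding of a Bowen ball under each $f_{\alpha_j}$; the holomorphic structure, through the relation between $d_{top}$ and the pullback of $\omega^{n}$, keeps the volume bookkeeping exact all along the composition. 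Assembling this per-word estimate with the reduction above, and combining it with the upper bound established first, yields \eqref{E:key_e_bounds}.
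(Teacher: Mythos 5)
Your upper bound is correct and is exactly the paper's argument: compute $d_n(\Gamma_{\gen})=\sum_j d_{top}(f_j)$ and $d_0(\Gamma_{\gen})=N$ and feed them into Result~\ref{R:ent_u-bd}. The reduction of the lower bound via Lemma~\ref{L:sum-up} to a per-word count $n(\eps,\wod)$ also matches the paper's strategy. The gap is in the per-word estimate itself. You correctly isolate the crucial quantity\,---\,the maximal number of points of $\Phi_{\wod,\nu}^{-1}\{z\}$ that a single Bowen ball can contain\,---\,but the mechanism you propose for bounding it (a uniform modulus of continuity and uniform volume-distortion bounds for the finitely many generators) does not address the actual obstruction. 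A uniform modulus of continuity limits how fast nearby points \emph{separate}; it does nothing to prevent preimages from \emph{clustering}. Near a critical point of $f_{\alpha_k}$ of local multiplicity $\mu$, a whole packet of $\mu$ preimages of a nearby value lies in a single $\eps$-ball, and if the backward orbit of $z$ shadows the critical locus for a positive fraction $\theta$ of the $\nu$ steps, one Bowen ball swallows on the order of $\mu^{\theta\nu}$ fibre points. Nothing forces $\theta\to 0$ as $\eps\to 0$ uniformly in $\nu$ and $\wod$: for a merely ``generic'' (regular) value $z$ the fibre points are distinct, but the scale at which they become $\eps$-separated depends on $\nu$, which is the wrong order of limits. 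So the claimed bound $C(\eps)^{\nu}$ with $C(\eps)\to 1$ cannot be extracted from Lipschitz control alone.

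What actually closes this gap\,---\,and what the paper does\,---\,is the Misiurewicz--Przytycki selection of the terminal point. Fix $L\geq\sup_X\jac(f_j)$, a threshold $\delta(\beta)=L^{-\beta/(1-\beta)}$, and the ``good'' sets $\mathcal{B}(\beta,j)=\{\jac(f_j)\geq\delta(\beta)\}$, covered by balls on which $f_j$ is invertible, with Lebesgue number $\eps(\beta)$. The chain rule shows that any $\nu$-orbit spending fewer than $\beta\nu$ steps in the good sets has composite Jacobian $<1$ at its starting point, so the image of the set of such starting points under $f_{\wod}$ has measure less than $\mu_X(X)$; by Sard one may choose the terminal point $x_\nu$ to be a regular value avoiding that image. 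One then builds the separated family as a tree of preimages of $x_\nu$, taking the \emph{full} fibre of $f_{\alpha_k}$ only at steps where that fibre lies entirely in $\mathcal{B}(\beta,\alpha_k)$ (where the Lebesgue-number argument gives $\eps(\beta)$-separation) and a single bad preimage otherwise. The choice of $x_\nu$ guarantees at least $\gti{\beta\nu}+1$ fully-branching steps, and the multinomial identity over all words recovers $\big(\sum_j d_{top}(f_j)\big)^{\beta\nu}$; letting $\beta\to1^-$ finishes. Note also that the paper's per-word count is weaker than the one you target (it credits full branching only to the $\beta\nu$ smallest-degree steps), yet still suffices after summing over words\,---\,so even granting your plan, you are asking for more than is needed, and the part you leave as an expectation is precisely the theorem's content.
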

\begin{proof}
Let $\omega_X$
be the normalisation of the form $\omega$ as in Section~\ref{S:prelim_complex}. 
For any holomorphic correspondence $\Gamma$ on $X$, we have (see
\cite[Section~3.1]{dinhSibony:dvtma06}, for instance):
\begin{equation}\label{E:d_top}
  \lambda_n(\Gamma)\,=\,\sum\nolimits_{1\leq j\leq N} m_j\!\!\int_{{\rm reg}(\Gamma_j)}
  \!\!\left(\pi_2|_{\Gamma_j}\right)^*\omega_X,
\end{equation}
assuming the presentation \eqref{E:stdForm_mult} for $\Gamma$. Since, for
each $j = 1,\dots, N$, $(\Gamma_j, \pi_2, X)$ is a holomorphic branched covering,
if follows from \eqref{E:d_top} and a change-of-variable argument that
$\lambda_n(\Gamma)$ equals the topological degree of $\Gamma$: call if $d_{top}(\Gamma)$.
Since the toplogical degree is multiplicative with respect to composition, it follows from
\eqref{E:d_deg} that
\[
  d_n(\Gamma)\,=\,\lim_{\nu\to \infty}d_{top}(\Gamma^{\circ \nu})^{1/\nu}\,=\,d_{top}(\Gamma)
\]
for any holomorphic correspondence $\Gamma$ on $X$. Applying this to the correspondence
$\Gamma_{\gen}$ we get
\begin{equation}\label{E:top_deg}
  d_n(\Gamma_{\gen})\,=\,\sum\nolimits_{1\leq j\leq N} d_{top}(f_j).
\end{equation}
A completely analogous discussion (whose details we leave to the reader) gives us
$d_0(\Gamma_{\gen})=N$. Recalling the definition of $h_{top}(S, \gen)$, 
the upper bound in \eqref{E:key_e_bounds} follows from the last identity,
\eqref{E:top_deg}, and Result~\ref{R:ent_u-bd}.
\smallskip

For any holomorphic map $f : X\lrarw X$, let $\jac(f)$ denote the real Jacobian
of $f$ determined by the volume form $\omega^n$. Since $f$ is holomorphic,
$\jac(f)\geq 0$. Fix a metric $d$ that metrises the topology of $X$. Fix a number $L$ such that
\[
  L\,>\,1 \quad\text{and}
  \quad \sup_{x\in X}\jac(f_j)(x)\,\leq\,L, \; \; j=1,\dots, N.
\]
Let us pick a number $\beta\in (0, 1)$ and set
$\delta(\beta) := L^{-\beta/(1-\beta)}$. Define the sets
\[
  \mathcal{B}(\beta, j)\,:=\,\{x\in X : \jac(f_j)(x)\geq \delta(\beta)\}, \quad j=1,\dots, N,
\]
and consider the open cover consisting of balls,
\[
  \mathscr{C}(\beta, j)\,:=\,\big\{B_d(x; r_x) : x\in \mathcal{B}(\beta, j) \text{ and }
  \left. f_j\right|_{B_d(x; r_x)} \text{ is invertible}\big\},
\]
of $\mathcal{B}(\beta, j)$, $j=1,\dots, N$. Let $\eps(\beta, j)\in (0, 1)$ be a Lebesgue
number of $\mathscr{C}(\beta, j)$ (each $\mathcal{B}(\beta, j)$ is compact) and write
$\eps(\beta) := \min_{1\leq j\leq N}\eps(\beta, j)$. 
\smallskip

Fix a $\nu\in \Z_+$. We simplify the symbol $\pth{\Gamma_{\gen}}_{\nu}$ to $\pth{\gen}_{\nu}$. 
For each $\wod \in \{1,\dots, N\}^{\nu}$, define
\[
  \A\,:=\,\big\{(x_0, x_1,\dots, x_{\nu}; \wod)\in \pth{\gen}_{\nu} \mid
  \sharp\{1\leq k\leq \nu : x_{k-1}\in \mathcal{B}(\beta, \alpha_k)\}\leq \beta\nu\big\}.
\]
For any $l$: $1\leq l\leq \nu$, let us abbreviate
\[
  f_{\alpha_l}\circ\dots\circ f_{\alpha_1}\,=:\,f_{(\alpha_1,\dots, \alpha_l)}.
\]
Whenever $\nu\geq 2$, the chain rule gives
\[
  \jac\big(f_{(\alpha_1,\dots, \alpha_\nu)}\big)(x)\,=\,\left[\,\prod_{2\leq k\leq \nu}
  \jac\big(f_{\alpha_k}\big)\big(f_{(\alpha_1,\dots, \alpha_{k-1})}(x)\big)\right]\jac\big(f_{\alpha_1}\big)(x).
\]
Therefore, by the definitions of $\A$ and $L$ (for any $\nu\in \Z_+$):
\begin{align}
 0\,\leq\,\jac\big(f_{(\alpha_1,\dots, \alpha_\nu)}\big)(x_0)\,&<\,\delta(\beta)^{\nu-\gti{\beta\nu}}
 		L^{\gti{\beta\nu}} \notag \\
 		&\leq \delta(\beta)^{\nu(1-\beta)}L^{\beta\nu}\,=\,1
 		 \quad \forall (x_0, x_1,\dots, x_{\nu}; \wod)\in \A, \label{E:size}
\end{align}
where here (and elsewhere in this proof) $\gti{s}$ denotes the greatest integer less than or equal to
$s$. If $\mu_X$ denotes the Borel measure, constructed in the standard manner, with the property that
$\mu_X(\OM) := \int_{\OM}\omega^n$ for every coordinate patch $\OM\subseteq X$, then:
\begin{enumerate}[label=$(\alph*)$, leftmargin=24pt]
  \item \eqref{E:size} implies that $\mu_X\big(f_{(\alpha_1,\dots, \alpha_\nu)}(\A)\big)
  < \mu_X(X)$ for each $\wod = (\alpha_1,\dots, \alpha_\nu)\in \{1,\dots, N\}^{\nu}$.
  
  \item\label{I:Sard} Thus, if we \textbf{fix} an $\wod$, then, by Sard's Theorem, there exists a point
  in $X\setminus f_{\wod}(\A)$ that is a regular value of $f_{\wod}$.
\end{enumerate} 
Let us call this regular value $x_{\nu}$.
\smallskip

For the $\wod = (\alpha_1,\dots, \alpha_{\nu})$ fixed in \ref{I:Sard} above, for any
$\alpha_k$, $1\leq k\leq \nu$, and any regular value $y$ of $f_{\alpha_{k}}$, we present a
construction associated with the pair $(y,k)$. Define
\[
  S(y, k)\,:=\,\begin{cases}
  			f_{\alpha_k}^{-1}\{y\}, &\text{if $f_{\alpha_k}^{-1}\{y\}\subset \mathcal{B}(\beta, \alpha_k)$}, \\
  			{} & {} \\
  			\{x^{(y)}\}, &\text{if $f_{\alpha_k}^{-1}\{y\}\not\subset \mathcal{B}(\beta, \alpha_k)$},
  			\end{cases}
\]
where, $x^{(y)}$ denotes some point in $f_{\alpha_k}^{-1}\{y\}\setminus \mathcal{B}(\beta, \alpha_k)$
that we pick and \textbf{fix}.
We now consider the point $x_{\nu}$ introduced at the end of the previous
paragraph. We will use it to construct a certain $(\eps(\beta), \wod)$-separated family in $\pth{\gen}_{\nu}$
using the following iterative construction. This construction is possible because, as every $f\in S$ is surjective,
by the definition of a regular value we get:
\[
  \text{each element of $(f_{\alpha_{\nu}}\circ\dots\circ f_{\alpha_{k+1}})^{-1}\{x_\nu\}$
  is a regular value of $f_{\alpha_k}$ for $1\leq k\leq \nu-1$.}
\]
Define (the maps appearing below were defined in Section~\ref{S:notn_top_ent}):
\begin{align*}
  {}^{1}O_{\wod}\,&:=\,\{(x, x_{\nu}; \alpha_{\nu}): x\in S(x_{\nu}, \nu)\}, \\
  {}^{k+1}O_{\wod}\,&:=\,\bigcup\nolimits_{\xi\in\,{}^kO_{\nu}}\big\{(x, \iti(\xi); 
  						\alpha_{\nu-k},\dots, \alpha_{\nu}):
  						x\in S(\sta(\xi), \nu-k)\big\}, \; \; 1\leq k\leq \nu-1.
\end{align*}
Here, we commit a minor abuse of notation in that if, for $1\leq k\leq \nu-1$,
${}^kO_{\nu}\ni\xi = (x_{\nu-k},\dots, x_{\nu}; \alpha_{\nu-k+1},\dots, \alpha_{\nu})$, then we
interpret $(x, \iti(\xi); \alpha_{\nu-k},\dots, \alpha_{\nu})$ to mean
\[
  (x, x_{\nu-k},\dots, x_{\nu}; \alpha_{\nu-k},\dots, \alpha_{\nu})
  \; \; \text{and \textbf{not}}\; \; (x, (x_{\nu-k},\dots, x_{\nu}); \alpha_{\nu-k},\dots, \alpha_{\nu}).
\]
With this explanation, note that each ${}^kO_{\nu}$ is a collection of $k$-orbits that end at the point
$x_{\nu}$. The iterative construction lengthens each $k$-orbit $\xi\in {}^kO_{\nu}$ to one or more $(k+1)$-orbits by
designating new initial points for the latter orbits.
\smallskip

Let us write $O_{\wod} := {}^{\nu}O_{\wod}$. We now show that $O_{\wod}$ is an
$(\eps(\beta), \wod)$-separated family of $\nu$-orbits. To do so, consider two
distinct $\nu$-orbits
\[
  {\sf x}:=(x_0,\dots, x_{\nu-1}, x_{\nu}; \wod) \quad\text{and}
  \quad {\sf y}:=(y_0,\dots, y_{\nu-1}, x_{\nu}; \wod)
\]
in $O_{\wod}$ (note that, by construction, the terminal points of these $\nu$-orbits are the same).
Write
\[
  \tau\,:=\,\max\{1\leq k\leq \nu : x_{k-1}\neq y_{k-1}\}.
\]
By our iterative construction, $x_{\tau-1}, y_{\tau-1}\in 
f_{\alpha_{\tau}}^{-1}\{x_{\tau}\}$. In terms of the notation introduced above, this also tells
us that $\sharp S(x_{\tau}, \tau)\geq 2$. This means that
\[
  x_{\tau-1}, y_{\tau-1}\in \mathcal{B}(\beta, \alpha_{\tau}) \quad\text{and}
  \quad\text{$f_{\alpha_{\tau}}$ is injective on small balls around  $x_{\tau-1}$, $y_{\tau-1}$}.
\]
Clearly, $x_{\tau-1}$ and $y_{\tau-1}$ cannot belong to one single ball belonging to
the open cover $\mathscr{C}(\beta, \alpha_{\tau})$. Thus, by the definition of Lebesgue number,
$d(x_{\tau-1}, y_{\tau-1}) > \eps(\beta)$. Since ${\sf x}\neq {\sf y} \in 
O_{\wod}$ were arbitrarily chosen, we conclude that $O_{\wod}$ is an
$(\eps(\beta), \wod)$-separated family.
\smallskip

Write $d_j := d_{top}(f_j)$, $j=1,\dots, N$. We may assume without loss of generality
that $d_1\leq\dots \leq d_N$. Let $\nu_j :=\ $the number of times $j$ appears in $\wod$.
Let us now set
\begin{equation}\label{E:m_and_p}
  m\,:=\,\gti{\beta\nu}+1, \qquad
  J\,:=\,\max\{1\leq j\leq N : \nu_1+\dots +\nu_j < m\}.
\end{equation}
By construction, for each $\nu$-orbit ${\sf x}\in O_{\wod}$,
$\sta(x)\in f_{\wod}^{-1}\{x_{\nu}\}$. Hence, by our above choice of $x_{\nu}$,
\begin{equation}\label{E:not_in_A}
  \{\sta(\bsy{x}) : \bsy{x}\in O_{\wod}\}\cap \A\,=\,\varnothing.
\end{equation}
Some more notation: write
\[
  \Sigma(\bsy{x})\,:=\,\{1\leq k\leq \nu: x_{k-1}\in \mathcal{B}(\beta, \alpha_k)\},
  \qquad \sigma(\bsy{x})\,:=\,\sharp\Sigma(\bsy{x})
\] 
for each $\bsy{x} = (x_0, x_1,\dots, x_{\nu}; \wod)\in O_{\wod}$. Additionally,
let $k_1 < k_2 <\dots < k_{\sigma(\bsy{x})}$ denote the ordering of the elements
of $\Sigma(\bsy{x})$. By \eqref{E:not_in_A}, $\sigma(\bsy{x})\geq m$ for each  
$\bsy{x}\in O_{\wod}$. With these facts, we can estimate $\sharp O_{\wod}$. To
do so, pick and \textbf{fix} an $\bsy{x} = (x_0, x_1,\dots, x_{\nu}; \wod)
\in O_{\wod}$. By construction:
\[
  \sharp S(x_k, k)\,=\,\begin{cases}
  					1, &\text{if $k\notin \Sigma(\bsy{x})$}, \\
  					d_{\alpha_k}, &\text{if $k\in \Sigma(\bsy{x})$}.
  					\end{cases}
\]
This means that in $O_{\wod}$\,:
\begin{itemize}[leftmargin=24pt]
  \item[$(*)$] we can find
  $d_{{\raisebox{1pt}{$\scriptstyle{\alpha}$}}_{k_l}}$ 
  distinct $\nu$-orbits that traverse the
  points $x_{k_l}, x_{k_l + 1},\dots, x_{\nu}\in X$ corresponding, respectively, to
  iterations of orders $k_l, k_l + 1,\dots, \nu$ of $\Gamma_{\gen}$,
  $l = 1,\dots \sigma(\bsy{x})$.
\end{itemize}
This implies that $O_{\wod}$ would have the smallest possible number of orbits of
the kind described by $(*)$ if $\nu_1+\dots +\nu_J =: d(\wod)$ of the elements of
$\Sigma(\bsy{x})$ were to correspond to
$\nu_j$ distinct terms in the tuple $(x_0,\dots, x_{\nu-1})$ being in
$\mathcal{B}(\beta, j)$, $j = 1,\dots, J$. From this discussion and $(*)$, we get the
(perhaps very conservative) lower bound:
\begin{equation}\label{E:O_est}
  n(\eps(\beta), \wod)\,\geq\,\sharp O_{\wod}\,\geq\,d_1^{\nu_1}
  \dots\,d_J^{\nu_J}\,d_{J+1}^{m-d(\wod)}.
\end{equation}
Here, $n(\eps(\beta), \wod)$ is as in Lemma~\ref{L:sum-up}, and the first
inequality in \eqref{E:O_est} is owing to the fact that $O_{\wod}$ is
$(\eps(\beta), \wod)$-separated. 
\smallskip

Now, given any $(\nu_1,\dots, \nu_N)\in \N^{N}$ satisfying
$\nu_1 + \dots + \nu_N = \gti{\beta\nu}+1$, we can find an
$\wod\in \{1,\dots, N\}^{\nu}$ so that the $\nu_j$'s are related to this
$\wod$ precisely as in the last paragraph. Thus:
\begin{align*}
  \sum_{\wod\in \{1,\dots, N\}^{\nu}}\!\!n(\eps, \wod)\,
  &\geq\!\sum_{\genfrac{}{}{0pt}{3}{\nu_1,\dots, \nu_N\in \N}{\nu_1+\dots +\nu_N = m}}
  								d_1^{\nu_1}\dots d_N^{\nu_N} \\
  &=\,(d_1+\dots +d_N)^m\,\geq\,(d_1+\dots +d_N)^{\beta\nu}.
\end{align*}
Applying Lemma~\ref{L:sum-up}, this gives
\[
  h_{top}(S, \gen)\,:=\,h_{top}(\Gamma_{\gen})\,\geq\,\beta\log(d_1+\dots +d_N).
\]
However, as this holds for any $\beta\in (0,1)$, letting $\beta\to 1^{-}$, we
get
\[
  h(S, \gen)\,\geq\,\log(d_1+\dots d_N).
\]
This establishes the lower bound in \eqref{E:key_e_bounds}, and hence the result.
\end{proof}

We remark here that it is, in general, not possible to get a cleaner upper bound for
$h_{top}(S, \gen)$ than \eqref{E:key_e_bounds}. For instance, there isn't, in general, a way to
determine which of the numbers $\{d_1(f),\dots, d_n(f)\}$ is the largest
even for $f : X\lrarw X$ surjective and holomorphic (let alone for
a general correspondence $\Gamma$). We shall not discuss here what \textbf{is}
known in general about the function $\{0, 1,\dots, n\}\ni p\mapsto d_p(f)$. However,
for $X$ as above, $\lambda_p(f)$, for $f : X\lrarw X$ holomorphic and $p=0, 1,\dots, n$,
can be determined cohomologically. This can lead to cleaner expressions whenever
$H^{p, p}(X; \R)$ are one-dimensional for each $p=1,\dots, n$. This, essentially,
is what underlies

\begin{proof}[The proof of Theorem~\ref{T:main_D-S}]
Russakovskii--Shiffman have shown \cite[Section~4]{russaSchiff:vdsrcd97} that for any
non-constant holomorphic map $f: \pro^n\lrarw \pro^n$
\begin{equation}\label{E:russ_schiff}
  \lambda_p(f)\,=\,d_p(f) \quad\text{and}
  \quad d_p(f)\,=\,d_1(f)^p \quad\text{for $p = 1,\dots, n$}.
\end{equation}
As argued in the proof of Theorem~\ref{T:general_bounds}, $d_n(f)=d_{top}(f)$. Thus,
from the above facts, we get
\begin{equation}\label{E:d_top_proj}
  \lambda_p(f)\,=\,d_{top}(f)^{p/n},
  \quad\text{for $p = 1,\dots, n$}.
\end{equation}
Fix a set of generators $\{f_1,\dots, f_N\}$ of $S$. Clearly, by definition, 
$\lambda_p(\Gamma_{\gen}^{\circ \nu})$ is the sum of the $\ordi{p}$ intermediate
degrees of the maps, counted according to multiplicity, whose graphs constitute
$\Gamma_{\gen}^{\circ\nu}$. From \eqref{E:d_top_proj}, we see that for $\pro^n$,
$\lambda_p$, $p=1,\dots, n$,
is multiplicative with respect to composition of non-constant holomorphic self-maps.
Thus, by \eqref{E:russ_schiff}, we get
\[
  \lambda_p(\Gamma_{\gen}^{\circ \nu})\,=\,\big(d_1(f_1)^p +\dots + d_1(f_N)^p\big)^\nu
  \quad\text{for $p = 1,\dots, n$}.
\]
Hence, $d_p(\Gamma_{\gen}) = \big(d_1(f_1)^p +\dots + d_1(f_N)^p\big)$,
$p = 1,\dots,n$. As for $p=0$: $d_0(\Gamma_{\gen}) = N$. Given these facts, the
conclusion of Theorem~\ref{T:main_D-S} follows from Theorem~\ref{T:general_bounds}.    
\end{proof}

Corollary~\ref{C:one-dim_D-S} now follows immediately. 
\smallskip

\section{Concerning Friedland's entropy}\label{S:main_F}

This section is dedicated to the result on $h_F$ mentioned in 
Section~\ref{S:intro}\,---\,i.e., Theorem~\ref{T:F_entr}. First, however,
we need some notation and a lemma.
Let $S$ be a finitely generated rational semigroup on $\pro^1$. If
we fix a finite set of generators $\gen$, then the space $\Gamma^{\infty}$
(introduced in Definition~\ref{D:entropy_F}) corresponding to $\Gamma_{\gen}$
will be denoted by $\pat$. Also,
we abbreviate $\pathsp{\Gamma_{\gen}}$ to $\pathsp{\gen}$. Given any
holomorphic correspondence $\Gamma$ from $X_1$ to $X_2$, where
$X_i$, $i=1, 2$, are as in Definition~\ref{D:holCorr}, we define
\[
  F_{\Gamma}(x)\,:=\,\pi_2\!\left(\big(\left.\pi_1\right|_{|\Gamma|}\big)^{-1}\{x\}\right)
  \quad \forall x\in X_1,
\]
and write $F_{\Gamma}^{\nu}(x):=F_{\Gamma^{\circ\nu}}(x)$. Coming back to the correspondence
$\Gamma_{\gen}$: we abbreviate $F_{\Gamma_{\gen}}^{\nu}(x)$ to $F_{\gen}^{\nu}(x)$.
   
\begin{lemma}\label{L:limsup}
Let $S$ be a finitely generated rational semigroup on $\pro^1$.
Let $\gen = \{f_1,\dots, f_N\}$ be
a set of generators of $S$. Write
\[
  \E\,:=\,\bigcup\nolimits_{j=1}^N\bigcup\nolimits_{i\neq j}\{x\in \pro^1 : f_i(x)=f_j(x)\}.
\]
Consider a point $\mathscr{O} = (x_0, x_1, x_2,\dots)\in \pat$. If the
pre-image of $\mathscr{O}$ under the map $\iti: \pathsp{\gen}\lrarw \pat$ is
infinite, then there exist an $n^{\bullet}\in \N$ and a point
\[
  x^{\bullet}\in \E\bigcap \Big(\limsup_{\nu\to \infty}F_{\gen}^{\nu}(x^{\bullet})\Big)
\]
such that $x^{\bullet}=x_{n^{\bullet}}$. 
\end{lemma}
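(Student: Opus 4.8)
The plan is to translate the hypothesis that $\iti^{-1}\{\mathscr{O}\}$ is infinite into a purely combinatorial statement about how often the orbit $\mathscr{O}$ meets the coincidence set $\E$, and then to extract a recurrent point by pigeonhole. First I would describe the fiber explicitly. For each $n\geq 1$ set $A_n := \{j\in\{1,\dots,N\} : f_j(x_{n-1})=x_n\}$. Since $\mathscr{O}=(x_0,x_1,\dots)\in\pat$, each pair $(x_{n-1},x_n)$ lies on some ${\sf graph}(f_j)$, so every $A_n$ is a non-empty (hence finite) subset of $\{1,\dots,N\}$. An element of $\pathsp{\gen}$ lying over $\mathscr{O}$ is exactly a choice of symbol $\alpha_n\in A_n$ for every $n$; therefore $\iti^{-1}\{\mathscr{O}\}$ is in bijection with $\prod_{n\geq 1}A_n$. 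A product of non-empty finite sets is infinite precisely when infinitely many factors have at least two elements, so the hypothesis is equivalent to the statement that $\sharp\{n : \sharp A_n\geq 2\}=\infty$.

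Next I would observe that $\sharp A_n\geq 2$ forces $x_{n-1}\in\E$: if $i\neq j$ both lie in $A_n$, then $f_i(x_{n-1})=x_n=f_j(x_{n-1})$, so $x_{n-1}$ is a coincidence point of the distinct generators $f_i,f_j$. The crucial structural input is that $\E$ is a \emph{finite} set: for $i\neq j$ the maps $f_i,f_j$ are distinct non-constant holomorphic self-maps of $\pro^1$, so $\{x : f_i(x)=f_j(x)\}$ is a proper analytic subvariety of the curve $\pro^1$, hence finite, and $\E$ is a finite union of such sets. Combining this with the previous step, $\mathscr{O}$ has infinitely many indices $m$ with $x_m\in\E$; since $\E$ is finite, the pigeonhole principle yields a single point $x^{\bullet}\in\E$ for which $T:=\{m\geq 0 : x_m=x^{\bullet}\}$ is infinite.

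Finally I would extract the return-time information. Let $n^{\bullet}:=\min T$, so $x^{\bullet}=x_{n^{\bullet}}$, which already gives the required coincidence with a point of $\mathscr{O}$. For any $m\in T$ with $m>n^{\bullet}$, each step of the orbit satisfies $x_k\in F_{\gen}(x_{k-1})$, so by induction $x_m\in F_{\gen}^{m-n^{\bullet}}(x_{n^{\bullet}})$, using that the support of $\Gamma_{\gen}^{\circ\nu}$ is the relational iterate of $|\Gamma_{\gen}|$ (as recorded in Section~\ref{S:prelim_complex}). Hence $x^{\bullet}=x_m\in F_{\gen}^{m-n^{\bullet}}(x^{\bullet})$. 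As $m$ ranges over the infinite set $T$, the exponent $\nu=m-n^{\bullet}$ takes infinitely many values, so $x^{\bullet}\in F_{\gen}^{\nu}(x^{\bullet})$ for infinitely many $\nu$, which is exactly $x^{\bullet}\in\limsup_{\nu\to\infty}F_{\gen}^{\nu}(x^{\bullet})$. This verifies all three required properties of $x^{\bullet}$.

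The place where the hypotheses are genuinely used\,---\,and the only real obstacle\,---\,is the finiteness of $\E$: once this is in hand, the proof is just pigeonhole plus a return-time extraction. Finiteness of $\E$ rests on the generators being pairwise distinct holomorphic maps of the \emph{curve} $\pro^1$, which is what keeps each coincidence set zero-dimensional. I would also take care to state precisely that an infinite product of non-empty finite sets is infinite if and only if infinitely many factors are non-singletons, since this equivalence is the exact bridge from the topological hypothesis on the fiber $\iti^{-1}\{\mathscr{O}\}$ to the combinatorial recurrence statement that drives the rest of the argument.
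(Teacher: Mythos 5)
Your proposal is correct and follows essentially the same route as the paper's proof: characterise the infinitude of the fiber by the existence of infinitely many indices at which more than one generator effects the transition, deduce that the orbit visits the finite set $\E$ infinitely often, apply pigeonhole to find a single recurrent point $x^{\bullet}$, and read off $x^{\bullet}\in\limsup_{\nu\to\infty}F_{\gen}^{\nu}(x^{\bullet})$ from the return times. Your explicit identification of $\iti^{-1}\{\mathscr{O}\}$ with the product $\prod_n A_n$ is a slightly more detailed justification of the equivalence the paper labels $(**)$, but the substance is identical.
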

\begin{proof}
First consider any $\mathscr{O} = (x_0, x_1, x_2,\dots)\in \pat$. For each $\nu\in \Z_+$,
there are only finitely many $j$ such that $f_j(x_{\nu-1}) = x_{\nu}$. Thus, it is easy to see that
$\iti^{-1}\{\mathscr{O}\}$ is infinite if and only if
\begin{itemize}
  \item[$(**)$] there exists a sequence of positive integers $\nu_1 < \nu_2 < \nu_3~\!\!<~\!\!\cdots$
  such that for each $k\in \Z_+$, $f_j(x_{(\nu_k-1)}) = x_{\nu_k}$ for \textbf{more than}
  one $j\in \{1,\dots, N\}$.
\end{itemize}
Now, assume that $\iti^{-1}\{\mathscr{O}\}$ is infinite. Then, by $(**)$ there exists a sequence of
positive integers
$\nu_1 < \nu_2 < \nu_3~\!\!<~\!\!\cdots$ such that
\begin{equation}\label{E:in_S}
  x_{(\nu_k-1)}\in \E \quad \forall k\in \Z_+.
\end{equation}
Since $f_1,\dots, f_N$ are distinct and $\pro^1$ is one-dimensional, $\E$ is finite.
Thus, by \eqref{E:in_S}, we conclude that there exists an increasing subsequence
$\{\nu_{k_{\ell}}\}\subset \{\nu_k\}$ and a point $x^{\bullet}\in \E$ such that
\[
  x_{(\nu_{k_{\ell}}-1)}\,=\,x^{\bullet}\in \E
  \quad \forall \ell\in \Z_+.
\]
If we write $n_{\ell}:=\nu_{k_\ell}-\nu_{k_1}$, then the above equation implies
that $x^{\bullet}\in F_{\gen}^{n_{\ell}}(x^{\bullet})$ for every $\ell\in \Z_+\setminus\{1\}$.
Therefore, we conclude that
\[
  x^{\bullet}\in\bigcap\nolimits_{k\in \N}\bigcup\nolimits_{\nu\geq k}
  F_{\gen}^{\nu}(x^{\bullet})\,=:\,\limsup_{\nu\to \infty}F_{\gen}^{\nu}(x^{\bullet}).
\]
Taking $n^{\bullet}=(\nu_{k_1}-1)$, the desired conclusion is obtained.
\end{proof}

Before we present Theorem~\ref{T:F_entr}, we elaborate upon the remark made towards the end
of Section~\ref{S:intro}.
For the purposes of this discussion, let $X$ be any compact
metric space and let $S$ be the semigroup generated by the maps $f_j: X\lrarw X$, $j=1,\dots,N$,
that are \emph{continuous}, and take $\Gamma = \cup_{1\leq j\leq N}\,{\sf graph}(f_j)$
in Definition~\ref{D:entropy_F}. For each $A\varsubsetneq \{1,\dots, N\}$, $A\neq \varnothing$,
consider
\[
  Y^{(A)}\,:=\,\{(x_0, x_1, x_2,\dots)\in \Gamma^\infty: x_{n+1}=f_j(x_n)
  \ \text{for some $j\in A$}, \ n=0, 1, 2,\dots\}.
\]
$Y^{(A)}$ is a closed subspace of $\Gamma^\infty$ that is invariant under $\sigma$, where
$\sigma$ is as in Definition~\ref{D:entropy_F}. Recalling the definition of 
$h_F(S, \gen)$, the basic properties of Bowen's entropy, and as $Y^{(A)}$ is
$\sigma$-invariant, we get (as before, $\gen := \{f_1,\dots, f_N\}$)
\[
  h_F(S, \gen)\,\geq\,h\big(\left.\sigma\right|_{Y^{(A)}}
  \big)\,=\,h_F(\langle f_j: j\in A\rangle,\,\{f_j: j\in A\}).
\]
When $A=\{j\}$, write $Y^{(A)} =: Y^{(j)}$. Observe:
$f_j$ is conjugate to $\left.\sigma\right|_{Y^{(j)}}$ via the map
$x\longmapsto (x, f_j(x), f_j\circ f_j(x),\dots)\in Y^{(j)}$. From this and our
preceding argument,
we get
\[
  h_F(S, \gen)\,\geq\,h\big(\left.\sigma\right|_{Y^{(j)}}\big)\,=\,h(f_j)
  \quad \forall j=1,\dots,N.
\]
Hence, $h_F(S, \gen)\geq \max_{1\leq j\leq N}h(f_j)$. In particular,
this estimate holds true for all the semigroups discussed in
Sections~\ref{S:intro},~\ref{S:main_D-S} and the present section.
\smallskip

One might intuit from the previous lemma that the latter lower bounds could be improved
(as least when $S$ is a finitely generated rational semigroup on $\pro^1$). That
intuition motivates the principal result of this section. We follow below the notation established for
Proposition~\ref{P:ent_ed}\,---\,for instance, $\sh$ is the shift map introduced by that
proposition.

\begin{theorem}\label{T:F_entr}
Let $S$ be a finitely generated rational semigroup on $\pro^1$. Let
$\gen = \{f_1,\dots, f_N\}$ be a set of generators of $S$. Define
\[
  \excp\,:=\,\Big\{(x_0, x_1, x_2,\dots)\in \pat : x_0\in \E \;\,\text {and} 
  \;\,x_0\in \limsup_{\nu\to \infty}F_{\gen}^{\nu}(x_0)\Big\}.
\]
Then, Friedland's entropy satisfies
\begin{align}
  \log\left(\,\sum\nolimits_{1\leq j\leq N} \deg(f_j)\right) - 
  \sup_{\mathscr{O}\in \excp}h\big(\sh, \iti^{-1}\{\mathscr{O}\}\big)\,&\leq\,h_{F}(S, \gen) \notag \\
  &\leq\,\log
  \left(\,\sum\nolimits_{1\leq j\leq N} \deg(f_j)\right). \label{E:F_entr_bounds}
\end{align}
\end{theorem}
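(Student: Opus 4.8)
The plan is to realise $\sigma:\pat\to\pat$\,---\,whose Bowen entropy is, by definition, $h_F(S,\gen)$\,---\,as a topological factor of the shift $\sh:\pathsp{\gen}\to\pathsp{\gen}$ via the itinerary map $\iti$, and then invoke Bowen's Result~\ref{R:bowen_quots}. First I would verify that $\iti:\pathsp{\gen}\to\pat$ is a continuous surjection of compact metric spaces that intertwines the two shifts, i.e. $\iti\circ\sh=\sigma\circ\iti$; the surjectivity of $\iti$, $\sh$ and $\sigma$ all follow from the surjectivity of the generators $f_j$ on $\pro^1$, and compactness is inherited from the ambient product spaces. Applying Result~\ref{R:bowen_quots} with $(Y_1,f_1)=(\pathsp{\gen},\sh)$, $(Y_2,f_2)=(\pat,\sigma)$ and $\pi=\iti$ then yields
\[
  h(\sigma)\,\leq\,h(\sh)\,\leq\,h(\sigma)+\sup_{\mathscr{O}\in\pat}h\big(\sh,\,\iti^{-1}\{\mathscr{O}\}\big).
\]
Combining Proposition~\ref{P:ent_ed} with Corollary~\ref{C:one-dim_D-S} identifies $h(\sh)=h_{top}(\Gamma_{\gen})=\log\big(\sum_j\deg(f_j)\big)$, while $h(\sigma)=h_F(S,\gen)$ by definition. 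The left inequality is then exactly the upper bound in \eqref{E:F_entr_bounds}, and the right inequality gives the lower bound, except with the supremum taken over all of $\pat$ rather than over $\excp$.

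The remaining, and main, task is thus the reduction
\[
  \sup_{\mathscr{O}\in\pat}h\big(\sh,\,\iti^{-1}\{\mathscr{O}\}\big)\,=\,\sup_{\mathscr{O}\in\excp}h\big(\sh,\,\iti^{-1}\{\mathscr{O}\}\big),
\]
for which the inequality $\geq$ is trivial. For $\leq$ I would first compute the fibre entropy explicitly. On a fibre $\iti^{-1}\{\mathscr{O}\}$ all points share the $x$-sequence of $\mathscr{O}=(x_0,x_1,\dots)$, so the metric $\Delta$ of Proposition~\ref{P:ent_ed} collapses to its symbolic part. Writing $a_\nu(\mathscr{O}):=\sharp\{j:f_j(x_{\nu-1})=x_\nu\}$ and taking $\eps\in(0,1)$ (so that $\kd$ forces symbol agreement), a direct count of minimal $(\eps,n)$-spanning sets gives $r_n(\eps,\iti^{-1}\{\mathscr{O}\})=\prod_{\nu=1}^{n}a_\nu(\mathscr{O})$, whence
\[
  h\big(\sh,\,\iti^{-1}\{\mathscr{O}\}\big)\,=\,\limsup_{n\to\infty}\frac1n\sum_{\nu=1}^{n}\log a_\nu(\mathscr{O}).
\]
In particular the fibre entropy vanishes whenever $\iti^{-1}\{\mathscr{O}\}$ is finite, equivalently whenever only finitely many $a_\nu$ exceed $1$.

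Now fix $\mathscr{O}\in\pat$. If its fibre is finite there is nothing to prove. If it is infinite, Lemma~\ref{L:limsup} supplies an $n^{\bullet}$ with $x_{n^{\bullet}}\in\E\cap\limsup_{\nu\to\infty}F_{\gen}^{\nu}(x_{n^{\bullet}})$, which is precisely the statement that $\sigma^{n^{\bullet}}(\mathscr{O})\in\excp$. Since $a_\nu(\sigma^{n^{\bullet}}\mathscr{O})=a_{n^{\bullet}+\nu}(\mathscr{O})$ and each $\log a_\mu$ lies in $[0,\log N]$, the two averages defining $h(\sh,\iti^{-1}\{\mathscr{O}\})$ and $h(\sh,\iti^{-1}\{\sigma^{n^{\bullet}}\mathscr{O}\})$ differ by at most $2n^{\bullet}\log N/n\to0$, so the fibre entropies over $\mathscr{O}$ and over $\sigma^{n^{\bullet}}(\mathscr{O})\in\excp$ coincide. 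This proves $\leq$ and completes the reduction, hence the theorem; note also that any fibre carrying positive entropy must be infinite, so this same argument shows $\excp\neq\varnothing$ in that case. The hard part is exactly this last reduction: the invariance of the fibre entropy under the base shift $\sigma$ rests on the explicit symbolic description of the fibres, and it is Lemma~\ref{L:limsup} that guarantees every fibre with positive entropy sits, after finitely many shifts, over a point of $\excp$.
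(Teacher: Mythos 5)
Your proposal is correct and follows essentially the same route as the paper: realising $\Sig$ as a factor of $\sh$ via $\iti$, applying Result~\ref{R:bowen_quots} together with Proposition~\ref{P:ent_ed} and Corollary~\ref{C:one-dim_D-S}, and then using Lemma~\ref{L:limsup} to replace $\sup_{\mathscr{O}\in\pat}h(\sh,\iti^{-1}\{\mathscr{O}\})$ by the supremum over $\excp$. The only deviation is cosmetic: where the paper observes directly that the spanning numbers of $\iti^{-1}\{\mathscr{O}\}$ and of the fibre over the shifted base point agree for large $\nu$, you derive an explicit product formula for the fibre entropy and invoke Ces\`aro averaging\,---\,noting only that, because of the $2^{-\nu}$ weights in $\Delta$, the exact count is $r_n(\eps,\cdot)=\prod_{\nu\leq n+O(\log_2(1/\eps))}a_\nu(\mathscr{O})$ rather than $\prod_{\nu\leq n}a_\nu(\mathscr{O})$, which does not affect the resulting $\limsup$.
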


Before seeing a proof of the above theorem, it might be helpful to discuss
when the estimates \eqref{E:F_entr_bounds} are informative. To this end, we refer to the paragraph
just after the proof of Lemma~\ref{L:limsup}. We see there a lower bound that applies to
any finitely generated rational semigroup on $\pro^1$. To the best of our knowledge, this (apart from
the exact computations for semigroups of M{\"o}bius transformations in \cite{friedland:egsg96}) is the
only lower bound known for $h_{F}(S, \gen)$. With this in mind: an example of a class of $(S, \gen)$ for which
\eqref{E:F_entr_bounds} would be informative is a class comprising finitely generated semigroups
of M{\"o}bius transformations with $\sharp\gen\geq 3$ such that the left-hand side of \eqref{E:F_entr_bounds}
is positive (however small). One way to achieve this, for $(S, \gen)$ as described, is for the maps in each proper
subset of $\gen$ of cardinality $(\sharp\gen -1)$ to have common fixed points (along with a technical condition)
but for the maps in $\gen$ to have no fixed points in common. This endows the elements in $\excp$ with
a very specific structure. However, since
\begin{itemize}[leftmargin=24pt]
  \item a detailed discussion of such an example would be rather protracted, and
  \item we do not see an overarching geometric description for (the class of) such semigroups,
\end{itemize}
we shall not dwell any further on this. That being said, there is an alternative approach that involves
establishing that $(\pathsp{\gen}, \mu)$ is isomorphic to $\big(\pat, (\iti)_*\mu\big)$ for an appropriate
Markov measure $\mu$, which accounts more coherently for the $(S,\gen)$'s alluded to above. This is currently work
in progress by the first-named author; details will appear elsewhere.
\smallskip

We present an example where the lower bound in \eqref{E:F_entr_bounds} is informative for a different reason, and
in which we get to see \textbf{some} of the reasoning hinted at above. Consider the semigroup $S$ whose set of generators
$\gen$ comprises the two functions $f_1$ and $f_2$ such that
\[
  \left. f_j\right|_{\C}:\,z\longmapsto z+a_j \quad  j=1,2,
\]
where $a_1\neq a_2\in \C$. So, $\E=\{\infty\}$. Moreover, as $\infty$ is the common
fixed point of $f_1$ and $f_2$, $\excp$ is a singleton consisting of the constant sequence
$\mathscr{O}_{\infty}\:=(\infty, \infty,\dots)$. Thus
\[
  \iti^{-1}\{\mathscr{O}_{\infty}\}\,=\,\{(\infty, \infty, \infty\dots; \alpha_1, \alpha_2,\dots): 
  \alpha_j\in \{1, 2\} \ \text{for each $j\in \Z_+$}\}.
\]
Observe that $\sh(\iti^{-1}\{\mathscr{O}_{\infty}\})\subseteq \iti^{-1}\{\mathscr{O}_{\infty}\}$.
Thus, the iterated application of $\sh$ on $\iti^{-1}\{\mathscr{O}_{\infty}\}$ behaves like the
dynamics of the left-shift on $\{1,2\}^{\N}$, whence we get
\[
  h(\sh, \iti^{-1}\{\mathscr{O}_{\infty}\})\,=\,\log(2).
\]
Thus, \eqref{E:F_entr_bounds} gives $0$ as the lower bound for $h_F(S, \gen)$.
However, there is \textbf{no better} lower bound for $h_F(S, \gen)$ in the vast majority of the examples
in this class! Specifically, if $\{a_1, a_2\}$ is linearly independent over $\R$,
or if $a_2 = ca_2$ for some $c\geq 0$, then $h_F(S, \gen)=0$\,---\,see \cite[Lemma~5.3]{friedland:egsg96}.
With those comments on \eqref{E:F_entr_bounds}, we now present

\begin{proof}[The proof of Theorem~\ref{T:F_entr}]
Consider the compact metric spaces $\pathsp{\gen}$ and $\pat$. Denote by
$\Sig: \pat\lrarw \pat$ the shift
\[
  \pat\ni (x_0, x_1, x_2,\dots)\mapsto (x_1, x_2, x_3,\dots).
\]
Recall that $\sh: \pthsp{\gen}\lrarw\pthsp{\gen}$ is as described in
Proposition~\ref{P:ent_ed}. We have:
\begin{itemize}[leftmargin=24pt]
  \item $\iti$ is a continuous surjective map; and
  \item $\iti\circ\sh = \Sig\circ\iti$.
\end{itemize}
In other words, $\Sig$ is a factor of $\sh$. Thus, Proposition~\ref{P:ent_ed}
and Corollary~\ref{C:one-dim_D-S} together imply:
\begin{equation}\label{E:up_bound}
  h_F(S, \gen)\,:=\,h(\Sig)\,\leq\,h(\sh)\,=\,h_{top}(S, \gen)\,=\,\log
  \left(\,\sum\nolimits_{1\leq j\leq N} \deg(f_j)\right).
\end{equation}

We now derive the lower bound for $h_F$.
For the moment, \textbf{fix} an $\mathscr{O}\in \pat$. We have two cases.%
\medskip

\noindent{\textbf{Case~1.} \emph{$\iti^{-1}\{\mathscr{O}\}$ is a finite set.}}
\vspace{0.6mm}

\noindent{For any $\nu\in \Z_+$ and $\eps>0$, $\mathscr{O}$ $(\eps, \nu)$-spans 
itself. Thus, the finiteness of $\iti^{-1}\{\mathscr{O}\}$ implies that
$h(\sh,\,\iti^{-1}\{\mathscr{O}\}) = 0$.}
\medskip

\noindent{\textbf{Case~2.} \emph{$\iti^{-1}\{\mathscr{O}\}$ is an infinite set.}} 
\vspace{0.6mm}

\noindent{Write $\mathscr{O} = (x_0, x_1, x_2,\dots)$. In this case, Lemma~\ref{L:limsup}
enables us to define
\[
  k(\mathscr{O})\,:=\,\min\Big\{n\in \N: x_n\in
  \E\cap \big(\limsup_{\nu\to \infty}F_{\gen}^{\nu}(x_n)\big)\Big\}.
\]
If $k(\mathscr{O})\geq 1$, then by definition, there is a fixed tuple 
$(\alpha_1,\dots, \alpha_{k(\mathscr{O})})$ such that
every element of $\iti^{-1}\{\mathscr{O}\}$ has the form
  $(x_0,\dots, x_{k(\mathscr{O})},\dots; \alpha_1,\dots, \alpha_{k(\mathscr{O})},\dots)$.
Thus, following the
notation of the discussion that precedes Result~\ref{R:bowen_quots},
for each $\eps>0$ we have:
\[
  r_{\nu}\big(\eps, \iti^{-1}\{\mathscr{O}\}\big)
  = r_{\nu}\big(\eps, \iti^{-1}\{(x_{k(\mathscr{O})},\,
  x_{k(\mathscr{O})+1},\,x_{k(\mathscr{O})+2}\,,\dots)\}\big) \quad \forall \nu \ \text{suffiiciently large.}
\]
We therefore conclude (irrespective of whether $k(\mathscr{O})=0$
or $k(\mathscr{O})\geq 1$) that
\begin{align}
  h\big(\sh, \iti^{-1}\{\mathscr{O}\}\big)\,&=\,h\big(\sh,\iti^{-1}\{(x_{k(\mathscr{O})},\,
  x_{k(\mathscr{O})+1},\,x_{k(\mathscr{O})+2}\,,\dots)\}\big), \; \;\text{and} \notag \\
  x_{k(\mathscr{O})}\,&\in\,\E
  \cap \big(\limsup_{\nu\to \infty}F_{\gen}^{\nu}(x_{k(\mathscr{O})})\big). \label{E:orbi}
\end{align}}

From the discussion of each of the above cases, and by \eqref{E:orbi}, we get
\[
  \sup_{\mathscr{O}\in \pat}h\big(\sh, \iti^{-1}
  \{\mathscr{O}\}\big)\,=\,\sup_{\mathscr{O}\in \excp}h\big(\sh, \iti^{-1}\{\mathscr{O}\}\big).
\]
From this, Result~\ref{R:bowen_quots} and Proposition~\ref{P:ent_ed}, we have
\begin{align}
  h_{top}(S, \gen)\,=\,h(\sh)\,&\leq\,h(\Sig) +
  \sup\nolimits_{\mathscr{O}\in \excp}h\big(\sh, \iti^{-1}\{\mathscr{O}\}\big) \notag \\
  &=\,h_{F}(S, \gen) +
  \sup\nolimits_{\mathscr{O}\in \excp}h\big(\sh, \iti^{-1}\{\mathscr{O}\}\big).  \label{E:lo_bound}
\end{align}
From \eqref{E:up_bound} and \eqref{E:lo_bound}, and given the conclusion of
Corollary~\ref{C:one-dim_D-S}, the theorem follows.
\end{proof}

\begin{remark}
Theorem~\ref{T:F_entr} can be extended to higher dimensions. But, in that case, it is possible for the
set $\E$ introduced in Lemma~\ref{L:limsup}
to contain algebraic varieties of positive dimension. In view of the discussion preceding the proof of
Theorem~\ref{T:F_entr}, it seems likely that it would be difficult to understand the analogue of the
lower bound of $h_{F}(S, \gen)$ in higher dimensions. Therefore, we have focused on classical rational
semigroups in this section.
\end{remark}

\section*{Acknowledgments}
\noindent{Gautam Bharali is supported by a Swarnajayanti Fellowship (Grant no.\;DST/SJF/MSA-02/2013-14)
and a UGC CAS-II grant (Grant No. F.510/25/CAS-II/2018(SAP-I)).}

\end{document}